% This is LLNCS.DOC the documentation file of
% the LaTeX2e class from Springer-Verlag
% for Lecture Notes in Computer Science, version 2.4
\documentclass[3p]{elsarticle}
\usepackage[hidelinks]{hyperref}
\usepackage{booktabs,multirow,array,caption}
\usepackage{tabularx}
\usepackage{graphicx}
\usepackage{float}
\usepackage{amsmath}
\usepackage{amssymb}
\usepackage{subfigure}
\usepackage{bigints}
\urlstyle{same}
\setcounter{secnumdepth}{4}
\newcommand\eatpunct[1]{}
\newdefinition{proposition}{Proposition}[section]
\newdefinition{remark}{Remark}[section]
\newdefinition{definition}{Definition}[section]
\newdefinition{example}{Example}
\newdefinition{theorem}{Theorem}[section]
\newdefinition{lemma}{Lemma}[section]
\newdefinition{corollary}{Corollary}[section]
\newproof{proof}{\textbf{Proof}}
\allowdisplaybreaks
\begin{document}
%\justify
\begin{frontmatter}
\title{Polynomial degree reduction in the $\mathcal{L}^2$-norm on a symmetric interval for the canonical basis}
\author{Habib Ben Abdallah\corref{cor}}
\ead{benabdallah-h@webmail.uwinnipeg.ca}
\author{Christopher J. Henry}
\ead{ch.henry@uwinnipeg.ca}
\author{Sheela Ramanna}
\ead{s.ramanna@uwinnipeg.ca}
\address{Department of Applied Computer Science, University of Winnipeg, Winnipeg, Manitoba, Canada}
\cortext[cor]{Corresponding author.}
%\institute{The University of Winnipeg}
%\maketitle
%\setlength{\parindent}{15pt}
\begin{abstract}
In this paper, we develop a direct formula for determining the coefficients in the canonical basis of the best polynomial of degree $M$ that approximates a polynomial of degree $N>M$ on a symmetric interval for the $\mathcal{L}^2$-norm. We also formally prove that using the formula is more computationally efficient than using a classical matrix multiplication approach and we provide an example to illustrate that it is more numerically stable than the classical approach.
\end{abstract}
\begin{keyword}
Polynomial degree reduction, Legendre polynomials, optimization, Euclidian norm, canonical basis.
\end{keyword}
\end{frontmatter}

\section{Introduction}\label{introduction}
Polynomials expressed in the canonical basis are widely used in different scientific fields such as machine learning, statistics and computer science. They can be used in different regression and classification models in machine learning \cite{machine_learning_1} \cite{machine_learning_2}, they can be used as the main building block for some statistical models \cite{statistical}, and they can also be used to compute hyperbolic and circular functions \cite{computer_science}. Although modern technology is able to rapidly exploit and manipulate polynomials, it is undeniable that saving computational and spatial resources is paramount for hardware and software optimization. Furthermore, in many cases, high degree polynomials are used on inputs (data) that are bounded in a certain interval. For instance, audio signal values are, in most of the cases, comprised between $-1$ and $1$, and grayscale images contain values that are comprised between $0$ and $255$ or $0$ and $1$. Moreover, some machine learning applications require polynomial-based models such as the 1-dimensional polynomial neural network model \cite{1DPNN} or the polynomial activation neural network model \cite{polynomial_activation} that use high degree polynomials in every neuron. Therefore, reducing the degree of a polynomial to obtain nearly the same results on a particular interval with increased spatial and computational efficiency can be vital for some new machine learning models to thrive.
\newline\indent
Polynomial reduction has been extensively explored for various norms to improve computer aided design (CAD) \cite{degree_reduction_1} \cite{degree_reduction_2} \cite{degree_reduction_3}, but the polynomials that are reduced are mostly expressed using Bernstein-Bézier coefficients because they are suitable to model graphical curves \cite{graphics}. However, the polynomials that are used in many machine learning models are usually expressed using coefficients in the canonical basis, or canonical coefficients. In fact, they can either be used as kernels \cite{machine_learning_2}\cite{kernels}\cite{intro_kernel_1}\cite{knn}\cite{intro_kernel_net}\cite{intro_kernel_net_1} or as core building blocks \cite{machine_learning_1}\cite{1DPNN}\cite{polynomial_activation}\cite{building_block_1}\cite{building_block_2}. The contribution of this work lies in the development of a direct formula to produce the canonical coefficients of a polynomial of low degree that approximates a polynomial of high degree which is expressed in the canonical basis. The reduced polynomial minimizes the $\mathcal{L}^2$-norm on any symmetric interval of the form $[-l,l]$ where $l\in\mathbb{R}_{+}^*$. We also demonstrated the theoretical proof that using the formula is more computationally efficient than using a classical approach consisting of matrix multiplications.
\newline\indent
The outline of this paper is as follows. In Section \ref{section:problem statement}, we define the problem and we provide a method to determine its solution. In Section \ref{section:orthonormal basis}, we define an orthonormal basis that is used as an intermediary for computing the canonical coefficients. In Section \ref{section:canonical basis}, we detail the steps that lead to determining the canonical coefficients of the reduced polynomial. In Section \ref{section:computational}, we analyze the computational complexities of the use of the direct formula and the use of the classical approach. We also provide two examples of how to use the results developed in this paper.
\section{Problem statement}\label{section:problem statement}
In this section, we define the problem and we provide a general way to determine the solution. Let $\mathbb{R}[X]$ be the polynomial algebra in one indeterminate $X$ over $\mathbb{R}$. Let $P,Q\in\mathbb{R}[X]$ such that $deg(Q)<deg(P)$. Let $N=deg(P)$ and $M=deg(Q)$. Let $(a_0,...,a_N)\in\mathbb{R}^{N+1}$ be the coefficients of $P$ in the canonical basis $(1,X,...,X^N)$ and $(b_0,...,b_M)\in\mathbb{R}^{M+1}$ be the coefficients of $Q$ in the canonical basis. Let $l\in\mathbb{R_+^*}$. We want to estimate the polynomial $Q$ that best approximates $P$ as such:
\begin{flalign*}
\min_{(b_0,...,b_M)}\dfrac{1}{2l}\int_{-l}^l\left(Q(x)-P(x)\right)^2dx=\min_{(b_0,...,b_M)}J_P(b_0,...,b_M).&&
\end{flalign*}
Determining the coefficients $(b_0,...,b_M)$ can be achieved by solving the linear equation $\nabla J_P=0$. However, solving the equation for these coefficients may prove to be difficult. Therefore, as an intermediate step, we suppose that there exists an orthonormal basis $(e_{0,l},...,e_{N,l})$ such that
\begin{flalign*}
\exists(\beta_0,...,\beta_M)\in\mathbb{R}^{M+1},Q=\sum_{n=0}^{M}\beta_ne_{n,l},&&
\end{flalign*}
and
\begin{flalign*}
\forall m,n\in[\![0,N]\!],\dfrac{1}{2l}\int_{-l}^{l}e_{m,l}(x)e_{n,l}(x)dx=\delta_{mn},&&
\end{flalign*}
where $\delta$ is the Kronecker delta. The objective is then to minimize $J'_P(\beta_0,...,\beta_M)=\dfrac{1}{2l}\int_{-l}^{l}(Q(x)-P(x))^2dx$ such that $Q$ is expressed in the orthonormal basis. Solving the equation $\nabla J'_P=0$ is equivalent to solving the following linear system:
\begin{flalign*}
\begin{split}
\forall m\in[\![0,M]\!],\dfrac{\partial J'_P}{\partial \beta_m}(\beta_0,...,\beta_M)=0&\iff\dfrac{1}{2l}\int_{-l}^{l}e_{m,l}(x)\left(\sum_{n=0}^M\beta_ne_{n,l}(x)-P(x)\right)dx=0\\
&\iff\sum_{n=0}^M\beta_n\dfrac{1}{2l}\int_{-l}^{l}e_{m,l}(x)e_{n,l}(x)dx=\dfrac{1}{2l}\int_{-l}^{l}P(x)e_{m,l}(x)dx\\
&\iff\beta_m=\dfrac{1}{2l}\int_{-l}^{l}P(x)e_{m,l}(x)dx=\sum_{n=0}^{N}a_n\dfrac{1}{2l}\int_{-l}^{l}x^ne_{m,l}(x)dx.
\end{split}&&
\end{flalign*} 
The solution of the system is then 
\begin{flalign}\label{eq:TMN}
\begin{pmatrix}
\beta_0\\
\vdots\\
\beta_M\\
\end{pmatrix}
=
T^{[M,N,l]}
\begin{pmatrix}
a_0\\
\vdots\\
\vdots\\
a_N
\end{pmatrix}
,&&
\end{flalign}
where $T^{[M,N,l]}$ is a $(M+1)\times (N+1)$ matrix such that
\begin{flalign*}
\forall (m,n)\in[\![0,M]\!]\times[\![0,N]\!],T^{[M,N,l]}_{m,n}= \dfrac{1}{2l}\int_{-l}^{l}x^ne_{m,l}(x)dx.&&
\end{flalign*}
$T^{[M,N,l]}$ is the orthonormal projection matrix of $\mathbb{R}_N[X]$ on $\mathbb{R}_M[X]$ from the canonical basis to the orthonormal basis. In order to determine the coefficients $(b_0,...,b_M)$ in the canonical basis, we use the transition matrix from the orthogonal basis to the canonical basis which happens to be $\left(T^{[M,M,l]}\right)^{-1}$ as such:
\begin{flalign*}
\left(T^{[M,M,l]}\right)^{-1}
\begin{pmatrix}
\beta_0\\
\vdots\\
\beta_M\\
\end{pmatrix}
=
\begin{pmatrix}
b_0\\
\vdots\\
b_M\\
\end{pmatrix}
=
\left(T^{[M,M,l]}\right)^{-1}T^{[M,N,l]}
\begin{pmatrix}
a_0\\
\vdots\\
\vdots\\
a_N
\end{pmatrix}
=
V^{[M,N,l]}
\begin{pmatrix}
a_0\\
\vdots\\
\vdots\\
a_N
\end{pmatrix}
.&&
\end{flalign*}
The objective is to determine $(b_0,...,b_M)$ by finding the elements of $V^{[M,N,l]}=\left(T^{[M,M,l]}\right)^{-1}T^{[M,N,l]}$ without the need to calculate the matrix product $\left(T^{[M,M,l]}\right)^{-1}T^{[M,N,l]}$.
\section{Determining the orthonormal projection of $\mathbb{R}_N[X]$ on $\mathbb{R}_M[X]$ in the orthonormal basis}\label{section:orthonormal basis}
In this section, we will define all the necessary notions that are needed to perform the orthonormal projection of $\mathbb{R}_N[X]$ on $\mathbb{R}_M[X]$. The notion of orthonormality is defined with respect to a scalar product. Thus, we first need to determine a suitable scalar product for the projection. We first define an intermediate scalar product that will serve as a basis for the remaining.
\begin{proposition}\label{proposition:scalar_product}
Let $\langle\cdot,\cdot\rangle$ be defined as such:
\begin{flalign*}
\begin{split}
\langle\cdot,\cdot\rangle\colon\mathbb{R}[X]\times\mathbb{R}[X]&\to\mathbb{R}\\
(A,B)&\mapsto\int_{-1}^{1}A(x)B(x)dx.
\end{split}&&
\end{flalign*}
$\langle\cdot,\cdot\rangle$ is a scalar product on $\mathbb{R}[X]$.
\end{proposition}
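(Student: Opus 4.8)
The plan is to verify directly the three defining properties of a real scalar product on the vector space $\mathbb{R}[X]$: bilinearity, symmetry, and positive-definiteness. First I would observe that the map is well defined: for any $A,B\in\mathbb{R}[X]$ the product $AB$ is a polynomial, hence continuous on the compact interval $[-1,1]$, so the integral $\int_{-1}^{1}A(x)B(x)\,dx$ exists and is a real number.

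Next I would dispatch the easy algebraic properties. Symmetry, $\langle A,B\rangle=\langle B,A\rangle$, is immediate from commutativity of multiplication in $\mathbb{R}[X]$, since $A(x)B(x)=B(x)A(x)$ pointwise. Linearity in the first argument, namely $\langle \lambda A_1+A_2,B\rangle=\lambda\langle A_1,B\rangle+\langle A_2,B\rangle$ for all $\lambda\in\mathbb{R}$ and $A_1,A_2,B\in\mathbb{R}[X]$, follows from distributivity of polynomial multiplication together with the linearity of the Riemann integral; linearity in the second argument then follows by combining this with symmetry. Hence $\langle\cdot,\cdot\rangle$ is a symmetric bilinear form.

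It then remains to prove positive-definiteness, which I expect to be the only step requiring a genuine argument. For any $A\in\mathbb{R}[X]$ we have $\langle A,A\rangle=\int_{-1}^{1}A(x)^2\,dx\ge 0$, because the integrand is nonnegative on $[-1,1]$. For the definiteness part, suppose $\langle A,A\rangle=0$. The function $x\mapsto A(x)^2$ is continuous and nonnegative on $[-1,1]$ with vanishing integral; a standard result then forces $A(x)^2=0$ for every $x\in[-1,1]$, i.e.\ $A$ vanishes identically on $[-1,1]$. Since a nonzero element of $\mathbb{R}[X]$ has only finitely many roots, $A$ having infinitely many roots implies $A=0$ in $\mathbb{R}[X]$. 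Combining the two directions, $\langle A,A\rangle=0\iff A=0$, which completes the verification that $\langle\cdot,\cdot\rangle$ is a scalar product on $\mathbb{R}[X]$. The main (and essentially only) obstacle is invoking correctly the continuity-plus-nonnegativity-plus-zero-integral argument and the finiteness of the root set of a nonzero polynomial; everything else is routine.
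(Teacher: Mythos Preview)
Your proposal is correct and follows essentially the same approach as the paper's proof: verify bilinearity and symmetry from properties of the integral, check nonnegativity of $\int_{-1}^{1}A(x)^2\,dx$, and for definiteness argue that $A$ vanishing on $[-1,1]$ forces infinitely many roots, hence $A=0$. Your write-up is slightly more detailed (well-definedness, the continuity-plus-zero-integral step spelled out), but the substance is identical.
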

\begin{proof}
$\langle\cdot,\cdot\rangle$ is bilinear and symmetric since the integral is bilinear and symmetric. $\langle\cdot,\cdot\rangle$ is positive since $\forall A\in\mathbb{R[X]},\langle A,A\rangle=\int_{-1}^{1}A^2(x)dx\geq0$. If $\langle A,A\rangle=0$ then $A(x)=0,\forall x\in[-1,1].$ This means that $A$ has an infinite number of roots, therefore, $A=0_{\mathbb{R}[X]}$. Thus, $\langle\cdot,\cdot\rangle$ is definite and is, in turn, a scalar product on $\mathbb{R}[X]$.\indent$\blacksquare$
\end{proof}
\begin{remark}
This is a well-known result but we are formally proving it for the sake of clarity.
\end{remark}
We now define the scalar product that is relevant to our problem.
\begin{proposition}\label{test}
Let $l\in\mathbb{R_+^*}$ and $\langle\cdot,\cdot\rangle_l$ be defined as such:
\begin{flalign*}
\begin{split}
\langle\cdot,\cdot\rangle_l\colon\mathbb{R}[X]\times\mathbb{R}[X]&\to\mathbb{R}\\
(A,B)&\mapsto\dfrac{1}{2l}\int_{-l}^{l}A(x)B(x)dx.
\end{split}&&
\end{flalign*}
$\langle\cdot,\cdot\rangle_l$ is a scalar product on $\mathbb{R}[X]$.
\end{proposition}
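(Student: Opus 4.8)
The plan is to verify the four defining properties of a scalar product --- bilinearity, symmetry, positivity and definiteness --- for $\langle\cdot,\cdot\rangle_l$, essentially by piggybacking on Proposition \ref{proposition:scalar_product}. The key observation is that for a fixed $l\in\mathbb{R}_+^*$ the number $\frac{1}{2l}$ is a well-defined, strictly positive real constant, so $\langle\cdot,\cdot\rangle_l$ is just a positive rescaling of the form $(A,B)\mapsto\int_{-l}^{l}A(x)B(x)dx$, and none of the four properties is affected by such a rescaling.

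Concretely, I would first note that bilinearity and symmetry follow immediately from the bilinearity and symmetry of the integral (exactly as in the proof of Proposition \ref{proposition:scalar_product}), since multiplying a bilinear symmetric map by the constant $\frac{1}{2l}$ preserves both properties. For positivity, $\langle A,A\rangle_l=\frac{1}{2l}\int_{-l}^{l}A^2(x)dx\geq 0$ because $\frac{1}{2l}>0$ and the integrand is nonnegative. For definiteness, if $\langle A,A\rangle_l=0$ then dividing by the nonzero constant $\frac{1}{2l}$ gives $\int_{-l}^{l}A^2(x)dx=0$; since $A^2$ is continuous and nonnegative on $[-l,l]$, this forces $A(x)=0$ for all $x\in[-l,l]$, so $A$ has infinitely many roots and hence $A=0_{\mathbb{R}[X]}$, which is the same argument used for $\langle\cdot,\cdot\rangle$.

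An alternative, slightly more structural route I could take is the change of variable $x=lt$: it yields $\langle A,B\rangle_l=\frac{1}{2}\int_{-1}^{1}A(lt)B(lt)\,dt=\frac{1}{2}\langle\sigma_l(A),\sigma_l(B)\rangle$, where $\sigma_l\colon P(X)\mapsto P(lX)$ is a linear automorphism of $\mathbb{R}[X]$. Pulling back the scalar product $\langle\cdot,\cdot\rangle$ of Proposition \ref{proposition:scalar_product} along the automorphism $\sigma_l$ and rescaling by $\frac{1}{2}>0$ then produces a scalar product automatically; I would probably relegate this viewpoint to a remark rather than make it the main argument.

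There is no real obstacle here: the only point requiring any care is the hypothesis $l\in\mathbb{R}_+^*$, which is precisely what guarantees that $\frac{1}{2l}$ is defined and strictly positive, and it is exactly this positivity that lets positivity and definiteness survive the rescaling. The proof is therefore short and essentially a rerun of the previous one with the harmless factor $\frac{1}{2l}$ carried along.
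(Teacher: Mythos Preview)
Your proposal is correct and matches the paper's approach exactly: the paper's proof consists of the single sentence ``The proof can be derived from the proof of Proposition~\ref{proposition:scalar_product},'' and you have simply spelled out that derivation (carrying the positive constant $\tfrac{1}{2l}$ through the four axioms). The change-of-variable remark you offer as an alternative is a nice bonus but not needed.
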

\begin{proof}
The proof can be derived from the proof of Proposition \ref{proposition:scalar_product}.\indent$\blacksquare$
\end{proof}
The scalar product defined in Proposition \ref{proposition:scalar_product} is a well-known one that is related to a set of orthogonal polynomials called the Legendre polynomials.
\begin{definition}
The set of polynomials $(L_m)_{m\in\mathbb{N}}$ called Legendre polynomials is defined using Rodrigues formula \cite{rodrigues} as such:
\begin{flalign*}
\forall m\in\mathbb{N}, \forall x\in\mathbb{R}, L_m(x)=\dfrac{1}{2^mm!}\dfrac{d^m}{dx^m}\left[\left(x^2-1\right)^m\right].&&
\end{flalign*}
\end{definition}
\begin{lemma}
Legendre polynomials are orthogonal for $\langle\cdot,\cdot\rangle$.
\end{lemma}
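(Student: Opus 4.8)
The plan is to prove the orthogonality relation $\langle L_m, L_n\rangle = \int_{-1}^1 L_m(x)L_n(x)\,dx = 0$ for $m \neq n$ by repeated integration by parts, exploiting the structure of Rodrigues' formula. Without loss of generality assume $m \geq n$ (the scalar product is symmetric). Write $L_m(x) = \frac{1}{2^m m!}\frac{d^m}{dx^m}\bigl[(x^2-1)^m\bigr]$ and denote $u_m(x) = (x^2-1)^m$, so that $L_m = \frac{1}{2^m m!} u_m^{(m)}$.

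The key observation I would establish first is a boundary-vanishing lemma: for every $k$ with $0 \le k < m$, the derivative $u_m^{(k)}$ vanishes at $x = \pm 1$. This follows because $u_m(x) = (x-1)^m(x+1)^m$ has zeros of order $m$ at both endpoints, so any derivative of order strictly less than $m$ still contains a positive power of $(x-1)$ and of $(x+1)$ as a factor (by the Leibniz rule). With this in hand, I would integrate $\int_{-1}^1 u_m^{(m)}(x)\, L_n(x)\, dx$ by parts $m$ times, moving derivatives off $u_m^{(m)}$ and onto $L_n$. Each integration by parts produces a boundary term of the form $\bigl[u_m^{(m-1-j)}(x)\, (\text{derivative of } L_n)(x)\bigr]_{-1}^{1}$, and by the vanishing lemma every such boundary term is zero. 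After $m$ integrations by parts we are left, up to the constant $\frac{(-1)^m}{2^m m!}$, with $\int_{-1}^1 u_m(x)\, L_n^{(m)}(x)\, dx$. Since $L_n$ is a polynomial of degree $n < m$, its $m$-th derivative is identically zero, so the whole integral vanishes, giving $\langle L_m, L_n\rangle = 0$.

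I would keep the inductive bookkeeping of the integration-by-parts step explicit, perhaps phrasing it as: for $0 \le j \le m$,
\begin{flalign*}
\int_{-1}^{1} u_m^{(m)}(x)\, L_n(x)\, dx = (-1)^j \int_{-1}^{1} u_m^{(m-j)}(x)\, L_n^{(j)}(x)\, dx,&&
\end{flalign*}
which is proved by induction on $j$, the induction step being a single integration by parts whose boundary term vanishes by the lemma (valid since $m-j-1 < m$ in the relevant range). Setting $j = m$ and using $L_n^{(m)} = 0$ closes the argument.

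The main obstacle is not conceptual but organizational: one must be careful that the boundary terms really do vanish, i.e.\ that the index of the derivative of $u_m$ appearing in each boundary term is always strictly less than $m$ so the vanishing lemma applies, and one must handle the degenerate case $m = n$ is excluded by the statement while still allowing the argument to cover all $m > n \ge 0$. A secondary point worth stating cleanly is the Leibniz-rule justification of the vanishing lemma, since that is the only place where the double-root structure of $(x^2-1)^m$ is actually used; everything else is formal manipulation.
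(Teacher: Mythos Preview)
Your proof is correct and follows essentially the same approach as the paper: repeated integration by parts using Rodrigues' formula, with boundary terms vanishing because $\pm 1$ are roots of multiplicity $m$ of $(x^2-1)^m$. The only cosmetic difference is that the paper keeps both $L_m$ and $L_n$ in Rodrigues form and shifts derivatives until one side becomes the constant $(2m)!$, whereas you express only the higher-degree polynomial via Rodrigues and terminate directly with $L_n^{(m)}=0$; both variants rest on the same boundary-vanishing observation.
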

\begin{proof}
Let $m,n\in\mathbb{N}$ such that $m\neq n$. We suppose that $m<n$. We have
\begin{flalign*}
\langle L_m, L_n\rangle=\int_{-1}^{1}L_m(x)L_n(x)dx=\dfrac{1}{2^{m+n}m!n!}\int_{-1}^{1}\dfrac{d^m}{dx^m}\left[\left(x^2-1\right)^m\right]\dfrac{d^n}{dx^n}\left[\left(x^2-1\right)^n\right]dx=\dfrac{1}{2^{m+n}m!n!}I_{m,n},&&
\end{flalign*}
where
\begin{flalign*}
I_{m,n} = \int_{-1}^{1}\dfrac{d^m}{dx^m}\left[\left(x^2-1\right)^m\right]\dfrac{d^n}{dx^n}\left[\left(x^2-1\right)^n\right]dx.&&
\end{flalign*}
By using integration by parts, we obtain
\begin{flalign*}
\begin{split}
I_{m,n}&=\left[\dfrac{d^m}{dx^m}\left[\left(x^2-1\right)^m\right]\dfrac{d^{n-1}}{dx^{n-1}}\left[\left(x^2-1\right)^n\right]\right]^1_{-1}-\int_{-1}^{1}\dfrac{d^{m+1}}{dx^{m+1}}\left[\left(x^2-1\right)^m\right]\dfrac{d^{n-1}}{dx^{n-1}}\left[\left(x^2-1\right)^n\right]dx\\
&=-I_{m+1,n-1}=(-1)^mI_{2m, n-m},
\end{split}&&
\end{flalign*}
because $\dfrac{d^{n-k}}{dx^{n-k}}\left[\left(x^2-1\right)^n\right]=0,\forall x\in\{-1,1\},\forall k\in[\![1,n]\!].$ This is due to the fact that $-1$ and $1$ are both roots of multiplicity $n$ of $L_n$. Since the degree of the polynomial $\left(X^2-1\right)^m$ is $2m$ and its leading coefficient is $1$, we have
\begin{flalign*}
I_{2m,n-m}=(2m)!\int_{-1}^{1}\dfrac{d^{n-m}}{dx^{n-m}}\left[\left(x^2-1\right)^n\right]dx=(2m)!\left[\dfrac{d^{n-m-1}}{dx^{n-m-1}}\left[\left(x^2-1\right)^n\right]\right]_{-1}^{1}=0,&&
\end{flalign*}
since $n-m-1\geq0$. Consequently, $\forall m,n\in\mathbb{N}$ such that $m\neq n,\langle L_m, L_n\rangle=0$. We finally conclude that Legendre polynomials are orthogonal for $\langle\cdot,\cdot\rangle$.\indent$\blacksquare$
\end{proof}
\begin{remark}
Legendre polynomials are, by construction, orthogonal for $\langle\cdot,\cdot\rangle$ but we are formally proving it for the sake of clarity.
\end{remark}
Using Legendre polynomials, we want to define an orthonormal basis of $\mathbb{R}[X]$ for $\langle\cdot,\cdot\rangle_l$, where $l\in\mathbb{R}_+^*$.
\begin{theorem}
Let $m\in\mathbb{N},l\in\mathbb{R}_+^*$ and $e_{m,l}=\sqrt{2m+1}L_m\left(\dfrac{X}{l}\right)$. The set of polynomials $(e_{m,l})_{m\in\mathbb{N}}$ is an orthonormal basis of $\mathbb{R}[X]$ for $\langle\cdot,\cdot\rangle_{l}$.
\end{theorem}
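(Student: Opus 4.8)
The plan is to establish three properties in turn: orthonormality of the family $(e_{m,l})_{m\in\mathbb{N}}$ for $\langle\cdot,\cdot\rangle_l$, linear independence (an immediate consequence), and spanning.

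First I would strip off the scaling by $l$ with the change of variables $u=x/l$, which turns $\langle\cdot,\cdot\rangle_l$ into $\langle\cdot,\cdot\rangle$: for any $m,n\in\mathbb{N}$,
\[
\langle e_{m,l},e_{n,l}\rangle_l=\frac{\sqrt{2m+1}\sqrt{2n+1}}{2l}\int_{-l}^{l}L_m\!\left(\frac{x}{l}\right)L_n\!\left(\frac{x}{l}\right)dx=\frac{\sqrt{2m+1}\sqrt{2n+1}}{2}\int_{-1}^{1}L_m(u)L_n(u)\,du.
\]
For $m\neq n$ the orthogonality lemma for Legendre polynomials gives $\int_{-1}^{1}L_mL_n=0$, hence $\langle e_{m,l},e_{n,l}\rangle_l=0$. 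The case $m=n$ reduces to the identity $\int_{-1}^{1}L_m(x)^2\,dx=\frac{2}{2m+1}$, which yields $\langle e_{m,l},e_{m,l}\rangle_l=\frac{2m+1}{2}\cdot\frac{2}{2m+1}=1$.

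The hard part will be proving $\int_{-1}^{1}L_m^2=\frac{2}{2m+1}$, since so far only the orthogonality of the $L_m$ has been established. I would start from the Rodrigues formula and integrate by parts $m$ times; each boundary term vanishes because $-1$ and $1$ are roots of multiplicity $m$ of $(X^2-1)^m$, so
\[
\int_{-1}^{1}L_m(x)^2\,dx=\frac{(-1)^m}{(2^mm!)^2}\int_{-1}^{1}(x^2-1)^m\,\frac{d^{2m}}{dx^{2m}}\!\left[(x^2-1)^m\right]dx=\frac{(2m)!}{(2^mm!)^2}\int_{-1}^{1}(1-x^2)^m\,dx,
\]
using that $(X^2-1)^m$ is monic of degree $2m$, so its $2m$-th derivative equals $(2m)!$. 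It then remains to evaluate the Wallis-type integral $\int_{-1}^{1}(1-x^2)^m\,dx$, which I would handle via the reduction relation $I_m=\frac{2m}{2m+1}I_{m-1}$ (from one more integration by parts) with $I_0=2$, giving $I_m=\frac{2^{2m+1}(m!)^2}{(2m+1)!}$; substituting back collapses the factorials to $\frac{2}{2m+1}$.

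Finally, orthonormality forces the $e_{m,l}$ to be linearly independent. Since $\deg e_{m,l}=\deg L_m=m$, the family $(e_{0,l},\dots,e_{N,l})$ is free with $N+1$ elements, hence a basis of the $(N+1)$-dimensional space $\mathbb{R}_N[X]$ for every $N\in\mathbb{N}$. As every polynomial lies in some $\mathbb{R}_N[X]$, the family $(e_{m,l})_{m\in\mathbb{N}}$ spans $\mathbb{R}[X]$, and being orthonormal it is an orthonormal basis of $\mathbb{R}[X]$ for $\langle\cdot,\cdot\rangle_l$.
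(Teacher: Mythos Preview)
Your proof is correct and follows essentially the same route as the paper: reduce $\langle\cdot,\cdot\rangle_l$ to $\langle\cdot,\cdot\rangle$ via $u=x/l$, invoke the Legendre orthogonality lemma for $m\neq n$, compute $\int_{-1}^{1}L_m^2$ by $m$ successive integrations by parts from Rodrigues' formula down to a Wallis-type integral, and conclude the basis property by dimension counting on each $\mathbb{R}_N[X]$. The only cosmetic difference is in evaluating $\int_{-1}^{1}(1-x^2)^m\,dx$: the paper factors $(x^2-1)^m=(x-1)^m(x+1)^m$ and shifts exponents by integration by parts, whereas you use the recurrence $I_m=\tfrac{2m}{2m+1}I_{m-1}$ directly, which is arguably cleaner.
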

\begin{proof}
Let $l\in\mathbb{R}_+^*$. To prove that the set $(e_{m,l})_{m\in\mathbb{N}}$ is an orthonormal basis of $\mathbb{R}[X]$ for $\langle\cdot,\cdot\rangle_{l}$, we first need to prove that it is orthonormal for $\langle\cdot,\cdot\rangle_{l}$, then we need to prove that it is a basis of $\mathbb{R}_{m}[X]$, $\forall m\in\mathbb{N}$.
\newline 
Let $m,n\in\mathbb{N}$ such that $m\neq n$. We have
\begin{flalign*}
\langle e_{m,l},e_{n,l}\rangle_l=\dfrac{1}{2l}\int_{-l}^{l}e_{m,l}(x)e_{n,l}(x)dx=\dfrac{\sqrt{2m+1}\sqrt{2n+1}}{2l}\int_{-l}^{l}L_m(\dfrac{x}{l})L_n(\dfrac{x}{l})dx.&&
\end{flalign*}
We can use the substitution $t=\dfrac{x}{l}$ since $x\colon\mapsto\dfrac{x}{l}$ is a diffeomorphism from $[-l,l]$ to $[-1,1]$. Therefore, we obtain
\begin{flalign*}
\begin{split}
\langle e_{m,l},e_{n,l}\rangle_l&=\dfrac{\sqrt{2m+1}\sqrt{2n+1}}{2l}\int_{-1}^{1}L_m(t)L_n(t)ldt=\dfrac{\sqrt{2m+1}\sqrt{2n+1}}{2}\int_{-1}^{1}L_m(t)L_n(t)dt\\
&=\dfrac{\sqrt{2m+1}\sqrt{2n+1}}{2}\langle L_m,L_n\rangle=0.
\end{split}&&
\end{flalign*}
This proves that $(e_{m,l})_{m\in\mathbb{N}}$ is an orthogonal set. Let us show that it is orthonormal. To do so, we need to prove that $\langle e_{m,l},e_{m,l}\rangle_l=1$. We have
\begin{flalign*}
\langle e_{m,l},e_{m,l}\rangle_l=\dfrac{2m+1}{2}\langle L_m,L_m\rangle=\dfrac{2m+1}{2^{2m+1}(m!)^2}\int_{-1}^{1}\dfrac{d^m}{dx^m}\left[(x^2-1)^2\right]\dfrac{d^m}{dx^m}\left[(x^2-1)^2\right]dx=\dfrac{2m+1}{2^{2m+1}(m!)^2}I_{m,m},&&
\end{flalign*}
where
\begin{flalign*}
I_{m,m}=\int_{-1}^{1}\dfrac{d^m}{dx^m}\left[(x^2-1)^2\right]\dfrac{d^m}{dx^m}\left[(x^2-1)^2\right]dx.&&
\end{flalign*}
By iteratively using integration by parts, we obtain
\begin{flalign*}
\begin{split}
I_{m,m} &= (-1)^mI_{2m,0}=(-1)^m(2m)!\int_{-1}^1(x^2-1)^mdx=(-1)^m(2m)!\int_{-1}^1(x-1)^m(x+1)^mdx\\
&=(-1)^m(2m)!J_{m,0},
\end{split}&&
\end{flalign*}
where
\begin{flalign*}
J_{m,0}=\int_{-1}^1(x-1)^m(x+1)^mdx.&&
\end{flalign*}
By using integration by parts, we have
\begin{flalign*}
J_{m,0}=\left[(x-1)^m\dfrac{(x+1)^{m+1}}{m+1}\right]_{-1}^{1}-\dfrac{m}{m+1}\int_{-1}^{1}(x-1)^{m-1}(x+1)^{m+1}dx=-\dfrac{m}{m+1}J_{m,1}.&&
\end{flalign*}
By iterating, we find that
\begin{flalign}\label{eq:J}
\begin{split}
J_{m,0}&=(-1)^m\dfrac{(m!)^2}{(2m)!}J_{m,m}=(-1)^m\dfrac{(m!)^2}{(2m)!}\int_{-1}^{1}(x+1)^{2m}dx=(-1)^m\dfrac{(m!)^2}{(2m)!}\left[\dfrac{(x+1)^{2m+1}}{2m+1}\right]_{-1}^1\\
&=(-1)^m2^{2m+1}\dfrac{(m!)^2}{(2m+1)!}.
\end{split}&&
\end{flalign}
Finally, we obtain
\begin{flalign*}
\langle e_{m,l}, e_{m,l}\rangle_l=\dfrac{2m+1}{2^{2m+1}(m!)^2}I_{m,m}=(-1)^m\dfrac{(2m+1)!}{2^{2m+1}(m!)^2}J_{m,0}=(-1)^m\dfrac{(2m+1)!}{2^{2m+1}(m!)^2}(-1)^m2^{2m+1}\dfrac{(m!)^2}{(2m+1)!}=1.&&
\end{flalign*}
As a result, $(e_{m,l})_{m\in\mathbb{N}}$ is an orthonormal set for $\langle\cdot,\cdot\rangle_l$. In particular, $\forall m\in\mathbb{N},(e_{0,l},...,e_{m,l})$ is an orthonormal set of $\mathbb{R}_m[X]$ for $\langle\cdot,\cdot\rangle_l$ and its size is $m+1=dim(\mathbb{R}_m[X])$. Since $(e_{0,l},...,e_{m,l})$ is an orthonormal set, it is linearly independent. Hence, it is a basis of $\mathbb{R}_m[X]$. Finally, we conclude that the set of polynomials $(e_{m,l})_{m\in\mathbb{N}}$ is an orthonormal basis of $\mathbb{R}[X]$ for $\langle\cdot,\cdot\rangle_{l}$.\indent$\blacksquare$
\end{proof}
After determining the orthonormal basis, we want to determine how any polynomial expressed in the canonical basis can be expressed in the orthonormal basis.
\begin{lemma}\label{lemma:same_parity}
Let $l\in\mathbb{R}_+^*$ and $m\in\mathbb{N}$. The degrees of all the monomials of $e_{m,l}$ have the same parity as $m$.
\end{lemma}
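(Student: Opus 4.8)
The plan is to reduce the claim about $e_{m,l}$ to the analogous claim about the Legendre polynomial $L_m$, and then to establish the latter by inspecting Rodrigues' formula. Since $e_{m,l}=\sqrt{2m+1}\,L_m\!\left(\frac{X}{l}\right)$, multiplying by the nonzero scalar $\sqrt{2m+1}$ changes no exponents, and the affine substitution $X\mapsto X/l$ sends each monomial $c_jX^j$ of $L_m$ to $(c_j/l^j)X^j$, again preserving exactly the set of exponents that occur with nonzero coefficient. Hence it suffices to show that every monomial of $L_m$ has degree of the same parity as $m$.

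For that, I would start from $(X^2-1)^m$ and apply the binomial theorem to write it as $\sum_{k=0}^{m}\binom{m}{k}(-1)^{m-k}X^{2k}$, so that only even exponents appear. Differentiating $m$ times term by term, each monomial $X^{2k}$ with $2k\ge m$ contributes a term proportional to $X^{2k-m}$, while the terms with $2k<m$ vanish. Therefore every exponent occurring in $\frac{d^m}{dX^m}\big[(X^2-1)^m\big]$, and hence in $L_m$ after dividing by the constant $2^mm!$, is of the form $2k-m$ for some integer $k$, which is congruent to $m$ modulo $2$. An equivalent, more conceptual phrasing is that $(X^2-1)^m$ is an even polynomial, so its $m$-th derivative is even when $m$ is even and odd when $m$ is odd; in either case $L_m(-X)=(-1)^mL_m(X)$, which is precisely the parity statement.

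There is essentially no serious obstacle here; the only point worth a line of care is checking that the statement is not vacuous. Since $L_m$ has degree exactly $m$ (the $2k=2m$ term contributes the nonzero leading coefficient), the polynomial $e_{m,l}$ is nonzero and its leading monomial has degree $m$, so it does possess monomials — all of which, by the argument above, have degree congruent to $m$ modulo $2$. This completes the proof.
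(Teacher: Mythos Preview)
Your proof is correct and follows essentially the same approach as the paper: expand $(X^2-1)^m$ by the binomial theorem to see only even exponents appear, differentiate $m$ times to obtain exponents of the form $2k-m$, and then transfer the conclusion from $L_m$ to $e_{m,l}$ via the scaling $X\mapsto X/l$. Your write-up is in fact slightly more careful than the paper's---you explicitly track which terms survive the differentiation and note the non-vacuity of the statement---and your side remark that $L_m(-X)=(-1)^mL_m(X)$ is a clean conceptual reformulation, but the underlying argument is the same.
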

\begin{proof}
Let $l\in\mathbb{R}_+^*,m\in\mathbb{N}$. According to the binomial theorem, $(X^2-1)^m=\sum\limits_{n=0}^m\binom{m}{n}X^{2n}(-1)^{m-n}.$ Thus, every monomial of $(X^2-1)^m$ has an even degree. Therefore, given that $L_m\propto\dfrac{d^m}{dX^m}\left[(X^2-1)^m\right]$, the degrees of every monomial of $L_m$ will either be even or odd, depending on the parity of $m$, since $deg(L_m)=m$. Given that $e_{m,l}=\sqrt{2m+1}L_m\left(\dfrac{X}{l}\right)$, the degrees of all the monomials of $e_{m,l}$ also have the same parity as $m$.\indent$\blacksquare$
\end{proof}
\begin{theorem}\label{theorem:canonical to orthonormal}
Let $l\in\mathbb{R}_+^*$.
\begin{flalign}\label{eq:canonical to orthonormal}
\forall m,n\in\mathbb{N},
\begin{cases}
\langle X^{2n}, e_{2m+1,l}\rangle_l &= \langle X^{2n+1}, e_{2m,l}\rangle_l = 0\\
\langle X^{2n}, e_{2m,l}\rangle_l &= \sqrt{4m+1}.2^{2m+1}l^{2n}\dfrac{(2n)!(m+n+1)!}{(n-m)!(2(m+n+1))!}\boldsymbol{1}_{\mathbb{N}\setminus[\![0,m[\![}(n)\\
\langle X^{2n+1}, e_{2m+1,l}\rangle_l &= \sqrt{4m+3}.2^{2m+1}l^{2n+1}\dfrac{(2n+1)!(m+n+1)!}{(n-m)!(2(m+n+1)+1)!}\boldsymbol{1}_{\mathbb{N}\setminus[\![0,m[\![}(n)
\end{cases},&&
\end{flalign}
where $\boldsymbol{1}_{\mathbb{N}\setminus[\![0,m[\![}$ is the indicator function on $\mathbb{N}\setminus[\![0,m[\![$. 
\end{theorem}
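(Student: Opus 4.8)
The plan is to split (\ref{eq:canonical to orthonormal}) into the vanishing identities (first line) and the two explicit formulas (second and third lines), and to prove the latter two in parallel. First I would dispose of the first line by a parity argument: by Lemma \ref{lemma:same_parity} every monomial of $e_{2m+1,l}$ has odd degree and every monomial of $e_{2m,l}$ has even degree, so $x\mapsto x^{2n}e_{2m+1,l}(x)$ and $x\mapsto x^{2n+1}e_{2m,l}(x)$ are odd functions; their integrals over the symmetric interval $[-l,l]$ vanish, hence so do $\langle X^{2n},e_{2m+1,l}\rangle_l$ and $\langle X^{2n+1},e_{2m,l}\rangle_l$.

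For the explicit formulas, the first step is to reduce to the interval $[-1,1]$. Writing $e_{j,l}=\sqrt{2j+1}\,L_j(X/l)$ and performing the change of variable $t=x/l$ (a diffeomorphism of $[-l,l]$ onto $[-1,1]$, exactly as in the proof of the preceding theorem), one gets for every $k\in\mathbb{N}$
\[
\langle X^k,e_{j,l}\rangle_l=\frac{\sqrt{2j+1}\,l^{k}}{2}\int_{-1}^{1}t^{k}L_j(t)\,dt,
\]
so it suffices to evaluate $\int_{-1}^{1}t^kL_j(t)\,dt$ for $(j,k)=(2m,2n)$ and $(j,k)=(2m+1,2n+1)$. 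Substituting Rodrigues' formula $L_j(t)=\frac{1}{2^jj!}\frac{d^j}{dt^j}[(t^2-1)^j]$ and integrating by parts $j$ times — every boundary term vanishing because $\pm 1$ are roots of multiplicity $j$ of $(t^2-1)^j$, the same fact used in the proof that the $L_m$ are orthogonal — gives
\[
\int_{-1}^{1}t^kL_j(t)\,dt=\frac{(-1)^j}{2^jj!}\int_{-1}^{1}\frac{d^j}{dt^j}\!\left[t^k\right](t^2-1)^j\,dt.
\]
When $k<j$, i.e. $n<m$, the derivative $\frac{d^j}{dt^j}[t^k]$ is zero, which accounts precisely for the indicator $\boldsymbol{1}_{\mathbb{N}\setminus[\![0,m[\![}(n)$; otherwise $\frac{d^j}{dt^j}[t^k]=\frac{k!}{(k-j)!}t^{k-j}$, and since $k$ and $j$ have the same parity we write $k-j=2p$ with $p=n-m\ge 0$.

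It then remains to compute $\int_{-1}^{1}t^{2p}(t^2-1)^j\,dt$. Using evenness and the substitution $w=t^2$ turns this into a Beta integral,
\[
\int_{-1}^{1}t^{2p}(t^2-1)^j\,dt=(-1)^j\int_{-1}^{1}t^{2p}(1-t^2)^j\,dt=(-1)^j\int_{0}^{1}w^{p-1/2}(1-w)^j\,dw=(-1)^j\,\frac{\Gamma\!\left(p+\tfrac12\right)j!}{\Gamma\!\left(p+j+\tfrac32\right)},
\]
(this integral is standard and could equally be obtained by a recursion of the iterated–integration–by–parts type used to derive (\ref{eq:J})). Substituting the half-integer values $\Gamma\!\left(q+\tfrac12\right)=\frac{(2q)!}{4^qq!}\sqrt{\pi}$ to convert the Gamma functions into factorials, plugging everything back into the two displays above, and simplifying then yields the second and third lines of (\ref{eq:canonical to orthonormal}): in the even case directly, and in the odd case after the elementary cancellation $\frac{2^{2m+2}(m+n+2)!}{(2(m+n+2))!}=\frac{2^{2m+1}(m+n+1)!}{(2(m+n+1)+1)!}$.

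The argument is essentially bookkeeping, and the only delicate point is this last step: one must track the half-integer argument carefully — note that $p+j+\tfrac32$ equals $(m+n+1)+\tfrac12$ when $j=2m$ but $(m+n+2)+\tfrac12$ when $j=2m+1$ — and then verify that in the odd case the resulting factorial combination collapses to exactly the one written in the statement. Justifying that all $j$ successive boundary terms vanish is immediate from the multiplicity-$j$ root structure already exploited earlier in the paper, and the change of variables together with the parity argument are routine given the preceding results.
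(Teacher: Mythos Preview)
Your proof is correct and follows the paper up to the point where one must evaluate $\int_{-1}^{1}t^{2p}(t^2-1)^{j}\,dt$; there your route diverges. The paper attacks this integral by a second round of iterated integration by parts (the recursion $H_{n-m,2m}\to H_{n-m-1,2m+1}\to\cdots\to H_{0,m+n}$), ultimately reducing it to the integral $J_{m+n,0}=\int_{-1}^{1}(x^2-1)^{m+n}\,dx$ already computed in Eq.~(\ref{eq:J}). You instead substitute $w=t^{2}$, recognise a Beta integral $B(p+\tfrac12,j+1)$, and convert the resulting Gamma values to factorials via $\Gamma(q+\tfrac12)=\dfrac{(2q)!}{4^{q}q!}\sqrt{\pi}$. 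Your approach is shorter and handles the even and odd cases uniformly, at the cost of invoking the Beta--Gamma machinery; the paper's approach is fully elementary (nothing beyond integration by parts) and reuses the quantity $J_{m+n,0}$ already in hand, but requires tracking two nested recursions. The parity argument for the first line, the reduction to $[-1,1]$, the use of Rodrigues' formula, the $j$-fold integration by parts, and the handling of the case $n<m$ are all essentially identical to the paper's treatment.
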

\begin{proof}
Let $l\in\mathbb{R}_+^*$ and $m,n\in\mathbb{N}$. According to Lemma \ref{lemma:same_parity}, we know that the degrees of every monomial of $e_{2m+1,l}$ is odd, since $2m+1$ is odd. We also know that
\begin{flalign*}
\forall i\in[\![0,m]\!], \langle X^{2n},X^{2i+1}\rangle_l=\dfrac{1}{2l}\int_{-l}^{l}x^{2n}x^{2i+1}dx=\dfrac{1}{2l}\int_{-l}^{l}x^{2(n+i)+1}dx=\left[\dfrac{x^{2(n+i+1)}}{2(n+i+1)}\right]_{-l}^{l}=0.&&
\end{flalign*}
Accordingly, since the degrees of every monomial of $e_{2m+1,l}$ are odd, we obtain $\langle X^{2n},e_{2m+1,l}\rangle_l=0$. The same can be deduced for $\langle X^{2n+1},e_{2m,l}\rangle_l$.
\newline
Let us determine $\langle X^{2n}, e_{2m,l}\rangle_l$. We have
\begin{flalign*}
\langle X^{2n}, e_{2m,l}\rangle_l=\dfrac{1}{2l}\int_{-l}^{l}x^{2n}e_{2m,l}(x)dx=\dfrac{\sqrt{4m+1}}{2l}\int_{-l}^{l}x^{2n}L_{2m}\left(\dfrac{x}{l}\right)dx.&&
\end{flalign*}
We can use the substitution $t=\dfrac{x}{l}$ since $x\colon\mapsto\dfrac{x}{l}$ is a diffeomorphism from $[-l,l]$ to $[-1,1]$. As a result, we obtain
\begin{flalign*}
\langle X^{2n}, e_{2m,l}\rangle_l=\dfrac{\sqrt{4m+1}}{2l}\int_{-1}^{1}\left(lt\right)^{2n}L_{2m}\left(t\right)ldt=\dfrac{\sqrt{4m+1}}{2}l^{2n}\int_{-1}^{1}t^{2n}L_{2m}\left(t\right)dt=\dfrac{\sqrt{4m+1}}{2}l^{2n}\langle X^{2n},L_{2m}\rangle.&&
\end{flalign*}
We have
\begin{flalign}\label{eq:X2nL2m}
\langle X^{2n},L_{2m}\rangle=\dfrac{1}{2^{2m}(2m)!}\int_{-1}^{1}x^{2n}\dfrac{d^{2m}}{dx^{2m}}\left[(x^2-1)^{2m}\right]dx=\dfrac{1}{2^{2m}(2m)!}K_{2n,2m},&&
\end{flalign}
where
\begin{flalign*}
K_{2n,2m}=\int_{-1}^{1}x^{2n}\dfrac{d^{2m}}{dx^{2m}}\left[(x^2-1)^{2m}\right]dx.&&
\end{flalign*}
By using integration by parts, we obtain
\begin{flalign*}
\begin{split}
K_{2n,2m}&=\left[x^{2n}\dfrac{d^{2m-1}}{dx^{2m-1}}\left[(x^2-1)^{2m}\right]\right]_{-1}^{1}-2n\int_{-1}^{1}x^{2n-1}\dfrac{d^{2m-1}}{dx^{2m-1}}\left[(x^2-1)^{2m}\right]dx\\
&=-2nK_{2n-1,2m-1}.
\end{split}&&
\end{flalign*}
If $n<m$, we can iterate the previous result $2n$ times to find that
\begin{flalign*}
K_{2n,2m}=(-1)^{2n}(2n)!K_{0,2(m-n)}=(2n)!\int_{-1}^{1}\dfrac{d^{2(m-n)}}{dx^{2(m-n}}\left[(x^2-1)^{2m}\right]dx=0.&&
\end{flalign*}
Hence, $\langle X^{2n},e_{2m,l}\rangle_l=0$ when $n<m$. If $n\geq m$, we can iterate the integration by part of $K_{2n,2m}$ $2m$ times to obtain
\begin{flalign}\label{eq:K}
\begin{split}
K_{2n,2m}&=(-1)^{2m}\dfrac{(2n)!}{(2(n-m))!}K_{2(n-m),0}=\dfrac{(2n)!}{(2(n-m))!}\int_{-1}^{1}x^{2(n-m)}\left(x^2-1\right)^{2m}dx\\
&=\dfrac{(2n)!}{(2(n-m))!}H_{n-m,2m},
\end{split}&&
\end{flalign}
where
\begin{flalign*}
H_{n-m,2m}=\int_{-1}^{1}x^{2(n-m)}\left(x^2-1\right)^{2m}dx=\int_{-1}^{1}x^{2(n-m)-1}x\left(x^2-1\right)^{2m}dx.&&
\end{flalign*}
By using integration by parts, we obtain
\begin{flalign}\label{eq:H}
\begin{split}
H_{n-m,2m}&=\left[\dfrac{x^{2(n-m)-1}\left(x^2-1\right)^{2m+1}}{2(2m+1)}\right]_{-1}^{1}-\dfrac{2(n-m)-1}{2(2m+1)}\int_{-1}^{1}x^{2(n-m-1)}\left(x^2-1\right)^{2m+1}dx\\
&=-\dfrac{2(n-m)-1}{2(2m+1)}H_{n-m-1,2m+1}=(-1)^{n-m}\dfrac{(2(n-m))!}{2^{n-m}(n-m)!}\dfrac{(2m)!}{2^{n-m}(m+n)!}H_{0,m+n}\\
&=(-1)^{n-m}\dfrac{(2(n-m))!}{2^{2(n-m)}(n-m)!}\dfrac{(2m)!}{(m+n)!}\int_{-1}^{1}(x^2-1)^{m+n}dx\\
&=(-1)^{n-m}\dfrac{(2(n-m))!}{2^{2(n-m)}(n-m)!}\dfrac{(2m)!}{(m+n)!}J_{m+n,0}.
\end{split}&&
\end{flalign}
Using Eqs.(\ref{eq:J}), (\ref{eq:H}), (\ref{eq:K}), and (\ref{eq:X2nL2m}), we find that
\begin{flalign*}
\begin{split}
\langle X^{2n},e_{2m,l}\rangle_l &=\dfrac{\sqrt{4m+1}}{2}l^{2n}\dfrac{(2n)!(-1)^{n-m}(2(n-m))!(2m)!(-1)^{m+n}2^{2(m+n)+1}\left((m+n)!\right)^2}{2^{2m}(2m)!(2(n-m))!2^{2(n-m)}(n-m)!(m+n)!((2(m+n)+1)!}\\
&= \sqrt{4m+1}.2^{2m+1}l^{2n}\dfrac{(2n)!(m+n+1)!}{(n-m)!(2(m+n+1))!}.
\end{split}&&
\end{flalign*}
Since $\langle X^{2n},e_{2m,l}\rangle_l=0$ when $n<m$, we finally conclude that
\begin{flalign*}
\langle X^{2n}, e_{2m,l}\rangle_l &= \sqrt{4m+1}.2^{2m+1}l^{2n}\dfrac{(2n)!(m+n+1)!}{(n-m)!(2(m+n+1))!}\boldsymbol{1}_{\mathbb{N}\setminus[\![0,m[\![}(n).&&
\end{flalign*}
The same steps can be followed to determine $\langle X^{2n+1}, e_{2m+1,l}\rangle_l$.\indent$\blacksquare$
\end{proof}
Using Theorem \ref{theorem:canonical to orthonormal}, we can fully determine $T^{[M,N,l]}$ defined in Eq. (\ref{eq:TMN}) such that $\forall (m,n)\in[\![0,M]\!]\times[\![0,N]\!],T^{[M,N,l]}_{m,n}=\langle X^n,e_{m,l}\rangle_l.$
\section{Determining the orthonormal projection of $\mathbb{R}_N[X]$ on $\mathbb{R}_M[X]$ in the canonical basis}\label{section:canonical basis}
In this section, we use the theorems proposed in section \ref{section:orthonormal basis} to solve the problem defined in section \ref{section:problem statement}.
In order to determine the canonical basis coefficients, we need to determine the transition matrix $\left(T^{[M,M,l]}\right)^{-1}$ from the orthonormal basis to the canonical basis as defined in Eq. (\ref{eq:TMN}). To do so, we need to express the set of orthonormal polynomials $\left(e_{m,l}\right)_{m\in\mathbb{N}}$ in the canonical basis.
\begin{theorem}\label{theorem:orthonormal to canonical}
Let $l\in\mathbb{R}_+^*$. $\forall m\in\mathbb{N},e_{m,l}=\sum\limits_{n=0}^{m}\epsilon_{m,n,l}X^n$ where
\begin{flalign}\label{eq:orthonormal to canonical}
\forall m,n\in\mathbb{N},
\begin{cases}
\epsilon_{2m,2n+1,l}&=\epsilon_{2m+1,2n,l}=0\\
\epsilon_{2m,2n,l}&=(-1)^{m-n}\dfrac{\sqrt{4m+1}}{2^{2m}l^{2n}}\dfrac{(2(m+n))!}{(m+n)!(m-n)!(2n)!}\boldsymbol{1}_{\mathbb{N}\setminus[\![0,n[\![}(m)\\
\epsilon_{2m+1,2n+1,l}&=(-1)^{m-n}\dfrac{\sqrt{4m+3}}{2^{2m}l^{2n}}\dfrac{(2(m+n)+1)!}{(m+n)!(m-n)!(2n+1)!}\boldsymbol{1}_{\mathbb{N}\setminus[\![0,n[\![}(m)
\end{cases}
.&&
\end{flalign}
\end{theorem}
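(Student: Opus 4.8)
The plan is to invert the formula from Theorem~\ref{theorem:canonical to orthonormal}, which expresses the monomials $X^n$ (equivalently, the Legendre polynomials $L_n(X/l)$) in terms of the orthonormal basis, and instead express $e_{m,l}$ directly in the canonical basis. The cleanest route is to start from the classical closed form for the Legendre polynomials themselves, namely
\begin{flalign*}
L_m(x)=\frac{1}{2^m}\sum_{k=0}^{\lfloor m/2\rfloor}(-1)^k\binom{m}{k}\binom{2m-2k}{m}x^{m-2k},&&
\end{flalign*}
which follows by applying the $m$-fold derivative in Rodrigues' formula term-by-term to the binomial expansion of $(x^2-1)^m$ (exactly the computation already begun in the proof of Lemma~\ref{lemma:same_parity}). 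Since $e_{m,l}=\sqrt{2m+1}\,L_m(X/l)$, substituting $x=X/l$ pulls out a factor $l^{-(m-2k)}$ and multiplies the whole expression by $\sqrt{2m+1}$, giving $\epsilon_{m,m-2k,l}$ immediately; one then reindexes $m-2k$ as the target degree. I would prove the Legendre closed form as a preliminary step (either cite it or give the two-line Rodrigues computation), because it is the real content and everything else is bookkeeping.

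After that, the argument splits along parity, mirroring the structure of Theorem~\ref{theorem:canonical to orthonormal}. Lemma~\ref{lemma:same_parity} already guarantees $\epsilon_{2m,2n+1,l}=\epsilon_{2m+1,2n,l}=0$, so only the same-parity cases remain. For the even case, write $e_{2m,l}=\sqrt{4m+1}\,L_{2m}(X/l)$, expand $L_{2m}$ via the closed form with summation index $k\in[\![0,m]\!]$, and match the monomial $X^{2n}$ by setting $2m-2k=2n$, i.e.\ $k=m-n$; this requires $0\le m-n\le m$, i.e.\ $m\ge n$, which is precisely the indicator $\boldsymbol{1}_{\mathbb{N}\setminus[\![0,n[\![}(m)$. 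The odd case is identical with $e_{2m+1,l}=\sqrt{4m+3}\,L_{2m+1}(X/l)$ and $2m+1-2k=2n+1$, i.e.\ $k=m-n$ again.

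The main obstacle is purely algebraic: reconciling the binomial-coefficient form $\frac{(-1)^k}{2^{2m}}\binom{2m}{k}\binom{4m-2k}{2m}$ at $k=m-n$ with the stated factorial expression $(-1)^{m-n}\frac{\sqrt{4m+1}}{2^{2m}l^{2n}}\frac{(2(m+n))!}{(m+n)!(m-n)!(2n)!}$. Substituting $k=m-n$ gives $\binom{2m}{m-n}\binom{2m+2n}{2m}=\frac{(2m)!}{(m-n)!(m+n)!}\cdot\frac{(2m+2n)!}{(2m)!(2n)!}=\frac{(2(m+n))!}{(m-n)!(m+n)!(2n)!}$, so the $(2m)!$ cancels and the identity is exact; the sign $(-1)^k=(-1)^{m-n}$ and the factor $\sqrt{4m+1}/l^{2n}$ come along for free. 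I would present this cancellation explicitly for the even case and then remark that the odd case follows by the same substitution, replacing $(2(m+n))!$ by $(2(m+n)+1)!$ and $(2n)!$ by $(2n+1)!$ and $\sqrt{4m+1}$ by $\sqrt{4m+3}$.
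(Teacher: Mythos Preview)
Your proposal is correct and follows essentially the same route as the paper: both arguments expand $(x^2-1)^m$ by the binomial theorem, differentiate $m$ times via Rodrigues' formula to obtain the explicit monomial expansion of $L_m$, substitute $x\mapsto X/l$, and then reindex so that the summation index becomes the target degree $n$. The only organisational difference is that you first isolate the classical closed form $L_m(x)=\tfrac{1}{2^m}\sum_k(-1)^k\binom{m}{k}\binom{2m-2k}{m}x^{m-2k}$ and then specialise to $2m$ and $2m+1$, whereas the paper carries out the Rodrigues/binomial computation directly on $e_{2m,l}$; your explicit cancellation $\binom{2m}{m-n}\binom{2m+2n}{2m}=\tfrac{(2(m+n))!}{(m-n)!(m+n)!(2n)!}$ is exactly the simplification the paper obtains after its index shift $n\leftarrow n-m$.
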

\begin{proof}
Let $l\in\mathbb{R}_+^*,m\in\mathbb{N}$. We have 
\begin{flalign*}
\begin{split}
e_{2m,l}&=\sqrt{4m+1}L_{2m}\left(\dfrac{X}{l}\right)=\dfrac{\sqrt{4m+1}}{2^{2m}(2m)!}\dfrac{d^{2m}}{d\left(\dfrac{X}{l}\right)^{2m}}\left[\left(\left(\dfrac{X}{l}\right)^2-1\right)^{2m}\right]\\
&=\dfrac{\sqrt{4m+1}l^{2m}}{2^{2m}(2m)!}\dfrac{d^{2m}}{dX^{2m}}\left[\left(\left(\dfrac{X}{l}\right)^2-1\right)^{2m}\right].
\end{split}&&
\end{flalign*}
According to the binomial theorem, we have
\begin{flalign*}
\left(\left(\dfrac{X}{l}\right)^2-1\right)^{2m}=\sum_{n=0}^{2m}\binom{2m}{n}\left(\dfrac{X}{l}\right)^{2n}(-1)^{2m-n}.&&
\end{flalign*}
Consequently, we obtain
\begin{flalign*}
\begin{split}
e_{2m,l}&=\dfrac{\sqrt{4m+1}l^{2m}}{2^{2m}(2m)!}\sum_{n=m}^{2m}\binom{2m}{n}\dfrac{(-1)^{2m-n}(2n)!}{(2(n-m))!l^{2n}}X^{2(n-m)}\\
&=\dfrac{\sqrt{4m+1}}{2^{2m}}\sum_{n=m}^{2m}(-1)^{2m-n}\dfrac{1}{l^{2(n-m)}}\dfrac{(2n)!}{n!(2m-n)!((2(n-m))!}X^{2(n-m)}.
\end{split}&&
\end{flalign*}
By using the index substitution $n\leftarrow n-m$, we have
\begin{flalign*}
e_{2m,l}=\sum_{n=0}^m(-1)^{m-n}\dfrac{\sqrt{4m+1}}{2^{2m}l^{2m}}\dfrac{(2(m+n))!}{(m+n)!(m-n)!(2n)!}X^{2n}=\sum_{n=0}^{m}\epsilon_{2m,2n,l}X^{2n}.&&
\end{flalign*}
Therefore, by identification, we have $\epsilon_{2m, 2n+1,l}=0$ and 
\begin{flalign*}
\epsilon_{2m,2n,l}=
\begin{cases}
0&, \text{if $n>m$}\\
(-1)^{m-n}\dfrac{\sqrt{4m+1}}{2^{2m}l^{2m}}\dfrac{(2(m+n))!}{(m+n)!(m-n)!(2n)!}&,\text{otherwise}
\end{cases}.&&
\end{flalign*}
Finally, we obtain
\begin{flalign*}
\epsilon_{2m,2n,l}&=(-1)^{m-n}\dfrac{\sqrt{4m+1}}{2^{2m}l^{2m}}\dfrac{(2(m+n))!}{(m+n)!(m-n)!(2n)!}\boldsymbol{1}_{\mathbb{N}\setminus[\![0,n[\![}(m).&&
\end{flalign*}
The same steps can be followed for $\epsilon_{2m+1,2n,l}$ and $\epsilon_{2m+1,2n+1,1}$.\indent$\blacksquare$
\end{proof}
We can now use Theorem \ref{theorem:orthonormal to canonical} to determine $\left(T^{[M,M,l]}\right)^{-1}$ such that $\forall (m,n)\in[\![0,M]\!]^2,\left(T^{[M,M,l]}\right)^{-1}_{m,n}=\epsilon_{n,m}$. Having determined $T^{[M,N,l]}$ and $\left(T^{[M,M,l]}\right)^{-1}$, the canonical coefficients can be determined by calculating $V^{[M,N,l]}=\left(T^{[M,M,l]}\right)^{-1}T^{[M,N,l]}$. We can notice that $V^{[M,M,l]}=I_{M}$ where $I_{M}$ is the identity matrix. In that case, there is only the need to determine the elements of $V^{[M,N,l]}$ whose columns are strictly greater than $M$. We denote by $v_{m,n}^{[M,N,l]}$ an element of $V^{[M,N,l]}$ at line $m$ and column $n$, $\forall (m,n)\in[\![0,M]\!]\times[\![0,N]\!]$.
\begin{theorem}\label{theorem:V}
Let $l\in\mathbb{R}_+^*$ and $M,N\in\mathbb{N}$ such that $M<N$. Let $p=\left\lfloor\dfrac{M}{2}\right\rfloor, p_1=\left\lfloor\dfrac{M-1}{2}\right\rfloor,q=\left\lfloor\dfrac{N}{2}\right\rfloor$ and $q_1=\left\lfloor\dfrac{N-1}{2}\right\rfloor$. We have
\begin{flalign*}
\begin{split}
&\forall (m,n)\in[\![0,p]\!]\times[\![p_1+1,q_1]\!], v_{2m,2n+1}^{[M,N,l]}=0,\\
&\forall (m,n)\in[\![0,p_1]\!]\times[\![p+1,q]\!],v_{2m+1,2n}^{[M,N,l]}=0,\\
&\forall (m,n)\in[\![0,p]\!]\times[\![p+1,q]\!],v_{2m,2n}^{[M,N,l]}=\dfrac{(-1)^{p-m}l^{2(n-m)}(2n)!}{2^{n-m}(n-m)!(p-m)!(2m)!}\prod\limits_{r=n-p}^{n-m-1}r\prod\limits_{r=p+m+1}^{p+n}\dfrac{1}{2r+1},\\
&\text{and}\\
&\forall (m,n)\in[\![0,p_1]\!]\times[\![p_1+1,q_1]\!],v_{2m+1,2n+1}^{[M,N,l]}=\dfrac{(-1)^{p-m}l^{2(n-m)}(2n+1)!}{2^{n-m}(n-m)!(p-m)!(2m+1)!}\prod\limits_{r=n-p}^{n-m-1}r\prod\limits_{r=p+m+2}^{p+n+1}\dfrac{1}{2r+1}.
\end{split}&&
\end{flalign*}
\end{theorem}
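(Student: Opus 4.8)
The plan is to read the entries of $V^{[M,N,l]}=\left(T^{[M,M,l]}\right)^{-1}T^{[M,N,l]}$ straight off the closed forms already obtained, without ever forming the matrix product. Using the identifications $T^{[M,N,l]}_{k,n}=\langle X^n,e_{k,l}\rangle_l$ and $\left(T^{[M,M,l]}\right)^{-1}_{m,k}=\epsilon_{k,m,l}$ coming from Theorems \ref{theorem:canonical to orthonormal} and \ref{theorem:orthonormal to canonical}, the definition of the matrix product gives
\begin{flalign*}
v_{m,n}^{[M,N,l]}=\sum_{k=0}^{M}\epsilon_{k,m,l}\,\langle X^n,e_{k,l}\rangle_l,&&
\end{flalign*}
which is exactly the coefficient of $X^m$ in the $\langle\cdot,\cdot\rangle_l$-orthonormal projection $\sum_{k=0}^{M}\langle X^n,e_{k,l}\rangle_l\,e_{k,l}$ of $X^n$ onto $\mathbb{R}_M[X]$. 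So the whole statement reduces to evaluating this single sum.

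First I would settle the two vanishing identities. By Lemma \ref{lemma:same_parity} together with Eq. (\ref{eq:orthonormal to canonical}), $\epsilon_{k,m,l}=0$ unless $k\equiv m\pmod 2$, and by Eq. (\ref{eq:canonical to orthonormal}) $\langle X^n,e_{k,l}\rangle_l=0$ unless $k\equiv n\pmod 2$; hence every term of the sum vanishes when $m$ and $n$ have opposite parity, giving $v_{2m,2n+1}^{[M,N,l]}=v_{2m+1,2n}^{[M,N,l]}=0$ on the stated ranges. In the even--even case the same parity remark leaves only $k=2j$, and the two indicator functions in Eqs. (\ref{eq:canonical to orthonormal})--(\ref{eq:orthonormal to canonical}) force $m\le j$ and $j\le n$; since in the claimed range $n\ge p+1>p\ge j$, the surviving terms are precisely those with $m\le j\le p$. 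Substituting the explicit values of $\epsilon_{2j,2m,l}$ and $\langle X^{2n},e_{2j,l}\rangle_l$ then yields
\begin{flalign*}
v_{2m,2n}^{[M,N,l]}=2\,l^{2(n-m)}\dfrac{(2n)!}{(2m)!}\sum_{j=m}^{p}(-1)^{j-m}(4j+1)\dfrac{(2(j+m))!\,(j+n+1)!}{(j+m)!\,(j-m)!\,(n-j)!\,(2(j+n+1))!},&&
\end{flalign*}
so the task becomes showing that this sum equals the claimed product expression; the odd--odd case is handled verbatim after replacing $2m,2n$ by $2m+1,2n+1$, $e_{2j,l}$ by $e_{2j+1,l}$, and summing $j$ from $m$ to $p_1$.

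The core of the argument is this finite summation identity. I would prove it by induction on $p$, exploiting the block structure $T^{[2p,N,l]}=\bigl(\,T^{[2(p-1),N,l]}\ \big|\ \text{extra columns}\,\bigr)$: enlarging the projection space from $\mathbb{R}_{2(p-1)}[X]$ to $\mathbb{R}_{2p}[X]$ merely adds the term $\langle X^{2n},e_{2p,l}\rangle_l\,e_{2p,l}$, so $v_{2m,2n}^{[2p,N,l]}=v_{2m,2n}^{[2(p-1),N,l]}+\epsilon_{2p,2m,l}\,\langle X^{2n},e_{2p,l}\rangle_l$ (with the convention that an entry is $0$ once its row index exceeds the matrix dimension). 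The base case is $m=p$, where the sum collapses to the single term $\epsilon_{2p,2p,l}\,\langle X^{2n},e_{2p,l}\rangle_l$ and the claimed formula has empty products equal to $1$, so one only multiplies the leading coefficient of $e_{2p,l}$ by $\langle X^{2n},e_{2p,l}\rangle_l$. The inductive step then amounts to checking that the claimed closed form satisfies the same one-step recursion; after rewriting the half-integer products via $\prod_{r=0}^{k}(2r+1)=(2k+1)!/(2^{k}k!)$ this is an elementary, if lengthy, manipulation of factorials. Alternatively one can induct on $p-m$ and telescope the summand directly.

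I expect the main obstacle to be purely the bookkeeping in this last step: tracking the many factorials and the half-integer products $\prod(2r+1)^{\pm1}$, choosing the induction variable so the increment telescopes cleanly, and verifying the boundary cases — in particular that the general formula degenerates correctly at $m=p$ (even--even) and $m=p_1$ (odd--odd) to the top row of $V^{[M,N,l]}$, which is exactly where the ``$+1$'' shifts in the factorials of the statement have to line up with the leading coefficients $\epsilon_{2p,2p,l}$, $\epsilon_{2p_1+1,2p_1+1,l}$ and with $\langle X^{2n},e_{2p,l}\rangle_l$, $\langle X^{2n+1},e_{2p_1+1,l}\rangle_l$.
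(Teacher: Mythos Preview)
Your proposal is correct; the reduction to $v_{m,n}^{[M,N,l]}=\sum_{k}\epsilon_{k,m,l}\langle X^n,e_{k,l}\rangle_l$ and the parity argument for the two vanishing families are exactly how the paper starts. The divergence is in evaluating the surviving sum. The paper does \emph{not} induct on $p$. After the same factorial simplifications it isolates a residual sum $S_{m,n,p}$, promotes it to a two-variable polynomial $S_m(X,Y)$ with $S_m(n,p)=S_{m,n,p}$, and then proves the closed form by showing that $S_m(n,Y)$, viewed as a polynomial of degree $m+1$ in $Y$, has the $m+1$ prescribed roots $Y=m-\tfrac14(2t+1)$, $t=0,\dots,m$: this is done by a symmetric pairing of the summands about the middle index so that paired terms cancel. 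The leading coefficient is then identified via the binomial identity $\sum_{k=0}^{m}(-1)^k\binom{m+n'}{k}=(-1)^m\binom{m+n'-1}{m}$, which fixes the factorisation uniquely.

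Your inductive route is shorter and more elementary. The recursion $v_{2m,2n}^{[2p,\cdot,l]}=v_{2m,2n}^{[2(p-1),\cdot,l]}+\epsilon_{2p,2m,l}\langle X^{2n},e_{2p,l}\rangle_l$ is indeed immediate from the orthogonal-projection interpretation, and after clearing the half-integer products the inductive step collapses to the single linear identity $(n-p)\bigl(2(p+m)+1\bigr)+(p-m)\bigl(2(p+n)+1\bigr)=(n-m)(4p+1)$, which is a one-line check; the base case $m=p$ is exactly the computation you describe. What the paper's argument buys is some structural insight --- the product of odd integers in the final formula is literally read off as the factored form of the polynomial $S_m$ --- whereas your approach treats the closed form as an ansatz to be verified mechanically. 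Either way the odd--odd case goes through with the obvious index shifts, as you note.
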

\begin{proof}
Let $l\in\mathbb{R}_+^*$ and $M,N\in\mathbb{N}$ such that $M<N$. Let $p=\left\lfloor\dfrac{M}{2}\right\rfloor, p_1=\left\lfloor\dfrac{M-1}{2}\right\rfloor,q=\left\lfloor\dfrac{N}{2}\right\rfloor$ and $q_1=\left\lfloor\dfrac{N-1}{2}\right\rfloor$. We will first prove that $\forall (m,n)\in[\![0,p]\!]\times[\![p_1+1,q_1]\!], v_{2m,2n+1}^{[M,N,l]}=0$. 
\newline
Let $(m,n)\in[\![0,p]\!]\times[\![p_1+1,q_1]\!]$. We have
\begin{flalign*}
v_{2m,2n+1}^{[M,N,l]}=\sum_{k=0}^{M}\epsilon_{k,2m,l}\langle X^{2n+1},e_{k,l}\rangle_l.&&
\end{flalign*}
According to Theorem \ref{theorem:canonical to orthonormal} and Theorem \ref{theorem:orthonormal to canonical}, $\langle X^{2n+1},e_{k,l}\rangle_l=0$ when $k$ is even and $\epsilon_{k,2m,l}=0$ when $k$ is odd. However, $k$ can not be even and odd at the same time, thus, one of both terms in the multiplication will necessarily be nil. Thus, $v_{2m,2n+1}^{[M,N,l]}=0$. The same reasoning can be used for $v_{2m+1,2n}^{[M,N,l]}$.
\newline 
We now want to determine the value of $v_{2m,2n}^{[M,N,l]}, \forall (m,n)\in[\![0,p]\!]\times[\![p+1,q]\!]$. Let $(m,n)\in[\![0,p]\!]\times[\![p+1,q]\!]$. We have
\begin{flalign*}
v_{2m,2n}^{[M,N,l]}=\sum_{k=0}^{M}\epsilon_{k,2m,l}\langle X^{2n},e_{k,l}\rangle_l.&&
\end{flalign*}
According to Theorem \ref{theorem:canonical to orthonormal} and Theorem \ref{theorem:orthonormal to canonical}, $\langle X^{2n},e_{k,l}\rangle_l=0$ when $k$ is odd and $\epsilon_{k,2m,l}=0$ when $k$ is odd. Hence we obtain
\begin{flalign*}
v_{2m,2n}^{[M,N,l]}=\sum_{k=0}^{p}\epsilon_{2k,2m,l}\langle X^{2n},e_{2k,l}\rangle_l.&&
\end{flalign*}
According to Theorem \ref{theorem:orthonormal to canonical}, $\epsilon_{2k,2m,l}=0$ when $k<m$. As a result, we have
\begin{flalign*}
v_{2m,2n}^{[M,N,l]}=\sum_{k=m}^{p}\epsilon_{2k,2m,l}\langle X^{2n},e_{2k,l}\rangle_l.&&
\end{flalign*}
In order to determine $v_{2m,2n}^{[M,N,l]}$, we can first determine $v_{2(p-m),2n}^{[M,N,l]}$ as such:
\begin{flalign}\label{eq:v_2(p-m) before simplification}
\begin{split}
v_{2(p-m),2n}^{[M,N,l]}&=\sum_{k=p-m}^{p}\epsilon_{2k,2(p-m),l}\langle X^{2n},e_{2k,l}\rangle_l\\
&=\sum_{k=p-m}^{p}\dfrac{(-1)^{k+m-p}2(4k+1)l^{2(n+m-p)}}{(k+m-p)!(n-k)!}\dfrac{(2n)!}{(2(p-m))!}\dfrac{(k+n+1)!}{(2(k+n+1))!}\dfrac{(2(k+p-m))!}{(k+p-m)!}.
\end{split}&&
\end{flalign}
We have
\begin{flalign*}
(2(k+p-m))! = \prod\limits_{r=1}^{2(k+p-m)}r=\prod\limits_{r=1}^{k+p-m}2r\prod\limits_{r=0}^{k+p-m-1}(2r+1)=2^{k+p-m}(k+p-m)!\prod\limits_{r=0}^{k+p-m-1}(2r+1).&&
\end{flalign*}
Thus,
\begin{flalign}\label{eq:k+p-m}
\dfrac{(2(k+p-m))!}{(k+p-m)!}=2^{k+p-m}\prod\limits_{r=0}^{k+p-m-1}(2r+1).&&
\end{flalign}
The same steps can be followed to show that
\begin{flalign}\label{eq:k+n+1}
\dfrac{(k+n+1)!}{(2(k+n+1))!}=\dfrac{1}{2^{k+n+1}}\prod\limits_{r=0}^{k+n}\dfrac{1}{2r+1}.&&
\end{flalign}
From Eqs. (\ref{eq:k+p-m}) and (\ref{eq:k+n+1}), we can derive the following expression: 
\begin{flalign}\label{eq:(k+p-m)/(k+n+1)}
\dfrac{(k+n+1)!}{(2(k+n+1))!}\dfrac{(2(k+p-m))!}{(k+p-m)!}=\dfrac{1}{2^{n+m-p+1}}\prod\limits_{r=k+p-m}^{k+n}\dfrac{1}{2r+1}.&&
\end{flalign}
By injecting Eq. (\ref{eq:(k+p-m)/(k+n+1)}) in Eq. (\ref{eq:v_2(p-m) before simplification}), and by factorizing the terms of the summation that do not depend on $k$, we obtain
\begin{flalign}\label{eq:v_2(p-m) after simplification}
v_{2(p-m),2n}^{[M,N,l]}=\dfrac{l^{(2(n+m-p)}(2n)!}{2^{n+m-p}(2(p-m))!}\sum_{k=p-m}^{p}(-1)^{k-p+m}\dfrac{(4k+1)}{(k+m-p)!(n-k)!}\prod\limits_{r=k+p-m}^{k+n}\dfrac{1}{2r+1}.&&
\end{flalign}
Since Eq. (\ref{eq:v_2(p-m) after simplification}) consists in the summation of different fractions, we express the fractions with respect to the same common denominator as such:
\begin{flalign}\label{eq:v_2(p-m) with respect to Smnp}
v_{2(p-m),2n}^{[M,N,l]}=\dfrac{l^{2(n+m-p)}(2n)!}{2^{n+m-p}(2(p-m))!(n-p+m)!m!}\prod\limits_{r=2(p-m)}^{p+n}\dfrac{1}{2r+1}S_{m,n,p},&&
\end{flalign}
where
\begin{flalign*}
S_{m,n,p}=\sum_{k=p-m}^{p}(-1)^{k+m-p}(4k+1)\prod\limits_{n-k+1}^{n-p+m}r\prod\limits_{k+m-p+1}^{m}r\prod\limits_{2(p-m)}^{k+p-m-1}(2r+1)\prod\limits_{k+n+1}^{p+n}(2r+1).&&
\end{flalign*}
The objective is now to prove that 
\begin{flalign*}
S_{m,n,p}=(-1)^{m}\prod\limits_{2(p-m)}^{2p-m}(2r+1)\prod\limits_{n-p}^{n-p+m-1}r.&&
\end{flalign*}
To do so, we first need to rearrange the terms of $S_{m,n,p}$. We use the index substitution $k\leftarrow p-k$ to obtain
\begin{flalign*}
S_{m,n,p}=\sum_{k=0}^{m}(-1)^{m-k}(4(p-k)+1)\prod\limits_{r=n-p+k+1}^{n-p+m}r\prod\limits_{r=m+1-k}^{m}r\prod\limits_{r=2(p-m)}^{2p-m-k-1}(2r+1)\prod\limits_{r=p+n-k+1}^{p+n}(2r+1).&&
\end{flalign*}
We then incorporate the common terms of each boundary of each product into each product index to obtain
\begin{flalign*}
S_{m,n,p}=\sum_{k=0}^{m}(-1)^{k+m}(4(p-k)+1)\prod\limits_{r=k+1}^{m}(r+n-p)\hspace{-.5em}\hspace{-.5em}\prod\limits_{r=-k+m+1}^{m}\hspace{-.5em}\hspace{-.5em}\hspace{-.5em}r\hspace{.5em}\prod\limits_{r=-2m}^{-k-m-1}(4p+2r+1)\hspace{-.5em}\prod\limits_{r=-k+1}^{0}(2(r+p+n)+1).&&
\end{flalign*}
Let $S_m(X,Y)\in\mathbb{R}[X,Y]$ be a polynomial in 2 indeterminates $X$ and $Y$ over $\mathbb{R}$ defined as such:
\begin{flalign*}
S_m=\sum_{k=0}^{m}(-1)^{k+m}(4(Y-k)+1)\prod\limits_{r=k+1}^{m}(r+X-Y)\hspace{-.5em}\hspace{-.5em}\prod\limits_{r=-k+m+1}^{m}\hspace{-.5em}\hspace{-.5em}\hspace{-.5em}r\hspace{.5em}\prod\limits_{r=-2m}^{-k-m-1}(4Y+2r+1)\hspace{-.5em}\prod\limits_{r=-k+1}^{0}(2(r+Y+X)+1).&&
\end{flalign*}
This makes $S_{m,n,p}=S_m(n,p)$. Since our objective is to show that 
\begin{flalign*}
S_{m,n,p}=(-1)^{m}\prod\limits_{2(p-m)}^{2p-m}(2r+1)\prod\limits_{n-p}^{n-p+m-1}r=(-1)^{m}\prod\limits_{r=0}^{m}(4(p-m)+2r+1)\prod\limits_{r=0}^{m-1}(r+n-p),&&
\end{flalign*}
we can show that the polynomial $S_m(n,X)$ in one indeterminate $X$ can be factorized as such:
\begin{flalign*}
S_m(n,X)=(-1)^{m}\prod\limits_{r=0}^{m}(4(X-m)+2r+1)\prod\limits_{r=0}^{m-1}(r+n-X).&&
\end{flalign*}
Therefore, the objective is to show that $\forall t\in[\![0,m]\!],m-\dfrac{1}{4}(2t+1)$ is a root of $S_m(n,X)$ and that $\forall t\in[\![0,m-1]\!],t+n$ is also a root of $S_m(n,X)$. We begin by showing that $m-\dfrac{1}{4}(2t+1)$ is a root of $S_m(n,X),\forall t\in[\![0,m]\!]$. Let $t\in[\![0,m]\!]$. We want to show that $S_m\left(n,m-\dfrac{1}{4}(2t+1)\right)=0$. We have
\begin{flalign*}
\begin{split}
S_m\left(n,m-\dfrac{1}{4}(2t+1)\right)=\sum_{k=0}^{m}&(-1)^{k+m}(4(m-k)-2t)\prod\limits_{r=k+1}^{m}\left(r+n-m+\dfrac{t}{2}+\dfrac{1}{4}\right)\prod\limits_{r=-k+m+1}^{m}r\\
&\prod\limits_{r=-2m}^{-k-m-1}(4m+2(r-t))\prod\limits_{r=-k+1}^{0}\left(2(r+n+m)-t+\dfrac{1}{2}\right)\\
=\sum_{k=0}^{m}&U_{m,n,t,k},
\end{split}&&
\end{flalign*}
where $U_{m,n,t,k}$ represents the k-th term of the summation. The term $\prod\limits_{r=-2m}^{-k-m-1}(4m+2(r-t))$ is nil when $\exists r'\in\mathbb{N}$ such that $-2m\leq r'\leq -k-m-1$ and $4m+2(r'-t)=0$. This means that $r'=-2m+t$. However, since $r'\leq -k-m-1$, we find that, when $k\leq m-t-1$, $\prod\limits_{r=-2m}^{-k-m-1}(4m+2(r-t))=0$. Moreover, the term $4(m-k)-2t$ is nil when $k=m-\dfrac{t}{2}$. This means that $4(m-k)-2t=0$ when $t$ is even. We suppose that $t$ is even. Then, we can rewrite $S_m\left(n,m-\dfrac{1}{4}(2t+1)\right)$ by splitting the sum in two parts as such:
\begin{flalign}\label{eq:S split in two}
S_m\left(n,m-\dfrac{1}{4}(2t+1)\right)=\sum_{k=m-t}^{m-\left\lfloor \tfrac{t}{2}\right\rfloor-1}U_{m,n,t,k}+\sum_{k=m-\left\lfloor \tfrac{t}{2}\right\rfloor+1}^{m}U_{m,n,t,k}.&&
\end{flalign}
By using the substitution $k\leftarrow m-\left\lfloor \dfrac{t}{2}\right\rfloor-k$ on the first sum of Eq. (\ref{eq:S split in two}), and the substitution $k\leftarrow -m+\left\lfloor \dfrac{t}{2}\right\rfloor+k$ on the second sum of Eq. (\ref{eq:S split in two}), we can regroup both sums to obtain
\begin{flalign}\label{eq:S same sum index}
S_m\left(n,m-\dfrac{1}{4}(2t+1)\right)=\sum_{k=1}^{\left\lfloor \tfrac{t}{2}\right\rfloor}\left(U_{m,n,t,m-\left\lfloor \tfrac{t}{2}\right\rfloor-k}+U_{m,n,t,m-\left\lfloor \tfrac{t}{2}\right\rfloor+k}\right).&&
\end{flalign}
We have 
\begin{flalign}\label{eq:U first half}
\begin{split}
U_{m,n,t,m-\left\lfloor \tfrac{t}{2}\right\rfloor-k}=&(-1)^{2m-\left\lfloor \tfrac{t}{2}\right\rfloor-k}4k\prod\limits_{r=m-\left\lfloor \tfrac{t}{2}\right\rfloor-k+1}^{m}\left(r+n-m+\dfrac{t}{2}+\dfrac{1}{4}\right)\prod\limits_{r=k+\left\lfloor \tfrac{t}{2}\right\rfloor+1}^{m}r\\
&\prod\limits_{r=-2m}^{k+\left\lfloor \tfrac{t}{2}\right\rfloor-2m-1}(4m+2(r-t))\prod\limits_{r=k+\left\lfloor \tfrac{t}{2}\right\rfloor-m+1}^{0}\left(2(r+n+m)-t+\dfrac{1}{2}\right),
\end{split}&&
\end{flalign}
and
\begin{flalign}\label{eq:U second half}
\begin{split}
U_{m,n,t,m-\left\lfloor \tfrac{t}{2}\right\rfloor+k}=&(-1)^{2m-\left\lfloor \tfrac{t}{2}\right\rfloor+k}(-4k)\prod\limits_{r=m-\left\lfloor \tfrac{t}{2}\right\rfloor+k+1}^{m}\left(r+n-m+\dfrac{t}{2}+\dfrac{1}{4}\right)\prod\limits_{r=-k+\left\lfloor \tfrac{t}{2}\right\rfloor+1}^{m}r\\
&\prod\limits_{r=-2m}^{-k+\left\lfloor \tfrac{t}{2}\right\rfloor-2m-1}(4m+2(r-t))\prod\limits_{r=-k+\left\lfloor \tfrac{t}{2}\right\rfloor-m+1}^{0}\left(2(r+n+m)-t+\dfrac{1}{2}\right).
\end{split}&&
\end{flalign}
By factorizing the common terms of $U_{m,n,t,m-\left\lfloor \tfrac{t}{2}\right\rfloor-k}$ and $U_{m,n,t,m-\left\lfloor \tfrac{t}{2}\right\rfloor+k}$, we obtain
\begin{flalign}\label{eq:U factorized}
\begin{split}
&U_{m,n,t,m-\left\lfloor \tfrac{t}{2}\right\rfloor-k}+U_{m,n,t,m-\left\lfloor \tfrac{t}{2}\right\rfloor+k}\\
=&(-1)^{2m-\left\lfloor \tfrac{t}{2}\right\rfloor+k}4k\prod\limits_{r=m-\left\lfloor \tfrac{t}{2}\right\rfloor+k+1}^{m}\left(r+n-m+\dfrac{t}{2}+\dfrac{1}{4}\right)\prod\limits_{r=+k+\left\lfloor \tfrac{t}{2}\right\rfloor+1}^{m}r\prod\limits_{r=-2m}^{-k+\left\lfloor \tfrac{t}{2}\right\rfloor-2m-1}(4m+2(r-t))\\
&\prod\limits_{r=+k+\left\lfloor \tfrac{t}{2}\right\rfloor-m+1}^{0}\left(2(r+n+m)-t+\dfrac{1}{2}\right)W_{m,n,t,k},
\end{split}&&
\end{flalign}
where,
\begin{flalign}\label{eq:W}
\begin{split}
W_{m,n,t,k}=&\prod\limits_{r=m-\left\lfloor \tfrac{t}{2}\right\rfloor-k+1}^{m-\left\lfloor \tfrac{t}{2}\right\rfloor+k}\left(r+n-m+\dfrac{t}{2}+\dfrac{1}{4}\right)\prod\limits_{r=-k+\left\lfloor \tfrac{t}{2}\right\rfloor-2m}^{+k+\left\lfloor \tfrac{t}{2}\right\rfloor-2m-1}(4m+2(r-t))\\
&-\prod\limits_{r=-k+\left\lfloor \tfrac{t}{2}\right\rfloor+1}^{k+\left\lfloor \tfrac{t}{2}\right\rfloor}r\prod\limits_{r=-k+\left\lfloor \tfrac{t}{2}\right\rfloor-m+1}^{k+\left\lfloor \tfrac{t}{2}\right\rfloor-m}\left(2(r+n+m)-t+\dfrac{1}{2}\right).
\end{split}&&
\end{flalign}
By using the index substitutions $r\leftarrow -r$, $r\leftarrow r+m$, $r\leftarrow r-m-1$ and $r\leftarrow r-1$ respectively on the 4 products in Eq. (\ref{eq:W}), we obtain
\begin{flalign}\label{eq:W same product index}
\begin{split}
W_{m,n,t,k}=&\prod\limits_{r=-m+\left\lfloor \tfrac{t}{2}\right\rfloor-k}^{-m+\left\lfloor \tfrac{t}{2}\right\rfloor+k-1}\left(-r+n-m+ \dfrac{t}{2}+\dfrac{1}{4}\right)(2m+2(r-t))\\
&-\prod\limits_{r=-k+\left\lfloor \tfrac{t}{2}\right\rfloor-m}^{k+\left\lfloor \tfrac{t}{2}\right\rfloor-m-1}\left(r+m+1\right)\left(2(r+n+m+1)-t+\dfrac{1}{2}\right).
\end{split}
&&
\end{flalign}
By using the index substitution $r\leftarrow -2m+t-1-r$ on the second product in Eq. (\ref{eq:W same product index}), we obtain
\begin{flalign}\label{eq:second product in W}
\begin{split}
&\prod\limits_{r=-k+\left\lfloor \tfrac{t}{2}\right\rfloor-m}^{k+\left\lfloor \tfrac{t}{2}\right\rfloor-m-1}\left(r+m+1\right)\left(2(r+n+m+1)-t+\dfrac{1}{2}\right)\\
=&\prod\limits_{r=-k+\left\lfloor \tfrac{t}{2}\right\rfloor-m}^{k+\left\lfloor \tfrac{t}{2}\right\rfloor-m-1}(t-r-m)\left(-2r+2n-2m+t+\dfrac{1}{2}\right)\\
=&\prod\limits_{r=-k+\left\lfloor \tfrac{t}{2}\right\rfloor-m}^{k+\left\lfloor \tfrac{t}{2}\right\rfloor-m-1}(2t-2r-2m)\left(-r+n-m+\dfrac{t}{2}+\dfrac{1}{4}\right)\\
=&\prod\limits_{r=-k+\left\lfloor \tfrac{t}{2}\right\rfloor-m}^{k+\left\lfloor \tfrac{t}{2}\right\rfloor-m-1}(-(2m+2(r-t)))\left(-r+n-m+\dfrac{t}{2}+\dfrac{1}{4}\right)\\
=&(-1)^{2k}\prod\limits_{r=-k+\left\lfloor \tfrac{t}{2}\right\rfloor-m}^{k+\left\lfloor \tfrac{t}{2}\right\rfloor-m-1}(2m+2(r-t))\left(-r+n-m+\dfrac{t}{2}+\dfrac{1}{4}\right)\\
=&\prod\limits_{r=-k+\left\lfloor \tfrac{t}{2}\right\rfloor-m}^{k+\left\lfloor \tfrac{t}{2}\right\rfloor-m-1}(2m+2(r-t))\left(-r+n-m+\dfrac{t}{2}+\dfrac{1}{4}\right).
\end{split}&&
\end{flalign}
By injecting Eq. (\ref{eq:second product in W}) in Eq. (\ref{eq:W same product index}), we find that $W_{m,n,t,k}=0$. Consequently, using Eq. (\ref{eq:U factorized}), we have $U_{m,n,t,m-\left\lfloor \tfrac{t}{2}\right\rfloor-k}+U_{m,n,t,m-\left\lfloor \tfrac{t}{2}\right\rfloor+k}=0$. Using Eq. (\ref{eq:S same sum index}), we find that $S_m\left(n,m-\dfrac{1}{4}(2t+1)\right)=0$ when $t$ is even. The same can be concluded when $t$ is odd by rewriting $S_m\left(n,m-\dfrac{1}{4}(2t+1)\right)$ as such:
\begin{flalign}\label{S split in two odd}
S_m\left(n,m-\dfrac{1}{4}(2t+1)\right)=\sum_{k=m-t}^{m-\left\lfloor \tfrac{t}{2}\right\rfloor-1}U_{m,n,t,k}+\sum_{k=m-\left\lfloor \tfrac{t}{2}\right\rfloor}^{m}U_{m,n,t,k}.&&
\end{flalign}
The same steps can then be performed on Eq.(\ref{S split in two odd}) to show that it is nil. The only difference lies in applying the substitution $k\leftarrow -m+\left\lfloor \dfrac{t}{2}\right\rfloor+k+1$ on the second sum of Eq.(\ref{S split in two odd}). We can subsequently conclude that $m-\dfrac{1}{4}(2t+1)$ is a root of $S_m(n,X),\forall t\in[\![0,m]\!]$. Moreover, we notice that $n$ has no influence on whether $m-\dfrac{1}{4}(2t+1)$ is a root of $S_m(n,X),\forall t\in[\![0,m]\!]$, thus we can induce that $S_m\left(X,m-\dfrac{1}{4}(2t+1)\right)=0,\forall t\in[\![0,m]\!]$. Furthermore, since $p+1\leq n\leq q,\exists n'\in[\![1,q-p]\!]$ such that $n=n'+p$. We then have $S_{m,n,p}=S_{m,n'+p,p}=S_m(n'+p,p)$. Hence, we can study the polynomial $S_m(n'+X,X)$ to prove that 
\begin{flalign*}
S_m(n'+X,X)=(-1)^{m}\prod\limits_{r=-2m}^{-m}(4X+2r+1)\prod\limits_{r=0}^{m-1}(r+n').&&
\end{flalign*}
In order to do so, we need to prove that $deg\left(S_m(n'+X,X)\right)=m+1$ and that the leading coefficient of $S_m(n'+X,X)$ is $(-1)^{m}4^{m+1}\prod\limits_{r=0}^{m-1}(r+n')$. We have
\begin{flalign}\label{eq:S(n'+X,X)}
\begin{split}
S_m(n'+X,X)=\sum_{k=0}^{m}&(-1)^{k+m}(4(X-k)+1)\prod\limits_{r=k+1}^{m}(r+n')\hspace{-.5em}\hspace{-.5em}\prod\limits_{r=-k+m+1}^{m}\hspace{-.5em}\hspace{-.5em}\hspace{-.5em}r\hspace{.5em}\prod\limits_{r=-2m}^{-k-m-1}(4X+2r+1)\hspace{-.5em}\\
&\prod\limits_{r=-k+1}^{0}(2(r+n'+2X)+1).
\end{split}&&
\end{flalign}
By examining the monomials of $S_m(n'+X,X)$, we find that $deg\left(S_m(n'+X,X)\right)\leq1+(-k-m-1+2m+1)+k\leq m+1$. Since we proved that $S_m(n'+X,X)$ has at least $m+1$ distinct roots, $deg\left(S_m(n'+X,X)\right)=m+1$. To determine the leading coefficient $c_{m,n'}$ of $S_m(n'+X,X)$, we observe that every term of the summation in Eq. (\ref{eq:S(n'+X,X)}) is a polynomial of degree $m+1$. Then, we identify the leading coefficient of every term, and sum it with the other coefficients to obtain
\begin{flalign*}
\begin{split}
c_{m,n'}&=\sum_{k=0}^{m}(-1)^{k+m}4\prod\limits_{r=k+1}^{m}(r+n')\left(\prod\limits_{r=-k+m+1}^{m}r\right)4^{m-k}4^k.\\
&=(-1)^m4^{m+1}\sum_{k=0}^{m}(-1)^k\dfrac{(m+n')!}{(k+n')}\dfrac{m!}{(m-k)!}.
\end{split}&&
\end{flalign*}
By using the index substitution $k\leftarrow m-k$, we obtain
\begin{flalign}\label{eq:leading coefficient}
c_{m,n'}=4^{m+1}m!\sum_{k=0}^{m}(-1)^k\dfrac{(m+n')!}{k!(m+n'-k)!}=4^{m+1}m!\sum_{k=0}^{m}(-1)^k\binom{m+n'}{k}.&&
\end{flalign}
Henceforth, the objective is to show that 
\begin{flalign*}
\sum_{k=0}^{m}(-1)^k\binom{m+n'}{k}=(-1)^m\dfrac{1}{m!}\prod\limits_{r=0}^{m-1}(r+n')=(-1)^m\dfrac{(m+n'-1)}{m!(n'-1)!}=(-1)^m\binom{m+n'-1}{m}.&&
\end{flalign*}
To do so, we use Pascal's rule to obtain
\begin{flalign}\label{eq:sum cnk}
\begin{split}
\sum_{k=0}^{m}(-1)^k\binom{m+n'}{k}&=\sum_{k=0}^{m}(-1)^{k}\left(\binom{m+n'-1}{k}+\binom{m+n'-1}{k-1}\right)\\
&=\sum_{k=0}^{m-1}(-1)^k\binom{m+n'-1}{k}+(-1)^{m}\binom{m+n'-1}{m}+\sum_{k=0}^{m-1}(-1)^{k+1}\binom{m+n'-1}{k}\\
&=(-1)^{m}\binom{m+n'-1}{m}.
\end{split}&&
\end{flalign}
As a result, by using Eq. (\ref{eq:sum cnk}) in Eq. (\ref{eq:leading coefficient}), the leading coefficient $c_{m,n'}$ of $S_m(n'+X,X)$ is
\begin{flalign*}
c_{m,n'}=(-1)^m4^{m+1}m!\binom{m+n'-1}{m}=(-1)^m4^{m+1}\dfrac{(m+n'-1)!}{(n'-1)!}=(-1)^{m}4^{m+1}\prod\limits_{r=0}^{m-1}(r+n').&&
\end{flalign*}
Thus, since $\forall r\in[\![0,m]\!],m-\dfrac{1}{4}(2r+1)$ is a root of $S_m(n'+X,X)$, and $deg(S_m(n'+X,X))=m+1$, we have
\begin{flalign*}
\begin{split}
S_m(n'+X,X)&=(-1)^{m}4^{m+1}\prod\limits_{r=0}^{m}(X-m+\dfrac{1}{4}(2r+1))\prod\limits_{r=0}^{m-1}(r+n')\\
&=(-1)^{m}4^{m+1}\prod\limits_{r=0}^{m}\left(\dfrac{4(X-m)+2r+1}{4}\right)\prod\limits_{r=0}^{m-1}(r+n')\\
&=(-1)^{m}\prod\limits_{r=0}^{m}(4(X-m)+2r+1)\prod\limits_{r=0}^{m-1}(r+n').
\end{split}&&
\end{flalign*}
We then have $S_m(n'+p,p)=S_{m,n'+p,p}$ and since $n=n'+p$, we have $n'=n-p$ and we obtain
\begin{flalign}\label{eq:factorized Smnp}
S_{m,n,p}=(-1)^{m}\prod\limits_{r=0}^{m}(4(p-m)+2r+1)\prod\limits_{r=0}^{m-1}(r+n-p).&&
\end{flalign}
By using Eq. (\ref{eq:factorized Smnp}) in Eq. (\ref{eq:v_2(p-m) with respect to Smnp}), we obtain
\begin{flalign*}
\begin{split}
v_{2(p-m),2n}^{[M,N,l]}&=\dfrac{(-1)^{m}l^{2(n+m-p)}(2n)!}{2^{n+m-p}(2(p-m))!(n-p+m)!m!}\prod\limits_{r=2(p-m)}^{p+n}\dfrac{1}{2r+1}\prod\limits_{r=0}^{m}(4(p-m)+2r+1)\prod\limits_{r=0}^{m-1}(r+n-p)\\
&=\dfrac{(-1)^{m}l^{2(n+m-p)}(2n)!}{2^{n+m-p}(2(p-m))!(n-p+m)!m!}\prod\limits_{r=2(p-m)}^{p+n}\dfrac{1}{2r+1}\prod\limits_{r=2(p-m)}^{2p-m}(2r+1)\prod\limits_{r=0}^{m-1}(r+n-p)\\
&=\dfrac{(-1)^{m}l^{2(n+m-p)}(2n)!}{2^{n+m-p}(2(p-m))!(n-p+m)!m!}\prod\limits_{r=2p-m+1}^{p+n}\dfrac{1}{2r+1}\prod\limits_{r=n-p}^{n-p+m-1}r.
\end{split}&&
\end{flalign*}
Hence, we finally conclude that
\begin{flalign*}
v_{2m,2n}^{[M,N,l]}&=\dfrac{(-1)^{p-m}l^{2(n-m)}(2n)!}{2^{n-m}(n-m)!(p-m)!(2m)!}\prod\limits_{r=n-p}^{n-m-1}r\prod\limits_{r=p+m+1}^{p+n}\dfrac{1}{2r+1}.&&
\end{flalign*}
The same steps can be followed to determine $v_{2m+1,2n+1}^{[M,N,l]}$.\indent$\blacksquare$
\end{proof}
Since the elements of $V^{[M,N,l]}$ are defined using products, recurrence relationships are more interesting for computation purposes.
\begin{corollary}\label{corollary}
Let $l\in\mathbb{R}_+^*$ and $M,N\in\mathbb{N}$ such that $M<N$. Let $p=\left\lfloor\dfrac{M}{2}\right\rfloor, p_1=\left\lfloor\dfrac{M-1}{2}\right\rfloor,q=\left\lfloor\dfrac{N}{2}\right\rfloor$ and $q_1=\left\lfloor\dfrac{N-1}{2}\right\rfloor$. We have
\begin{flalign*}
\begin{split}
&\forall (m,n)\in[\![0,p-1]\!]\times[\![p+1,q]\!],v_{2m+2,2n}^{[M,N,l]}=-\dfrac{(n-m)(p-m)(2(p+m)+3)}{l^2(n-m-1)(2m+1)(m+1)}v_{2m,2n}^{[M,N,l]},\\
&\forall (m,n)\in[\![0,p]\!]\times[\![p+1,q-1]\!],v_{2m,2n+2}^{[M,N,l]}=\dfrac{l^2(2n+1)(n+1)(n-m)}{(n-m+1)(n-p)(2(p+n)+3)}v_{2m,2n}^{[M,N,l]},\\
&\forall (m,n)\in[\![0,p_1-1]\!]\times[\![p_1+1,q_1]\!],v_{2m+3,2n+1}^{[M,N,l]}=-\dfrac{(n-m)(p-m)(2(p+m)+5)}{l^2(n-m-1)(2m+3)(m+1)}v_{2m+1,2n+1}^{[M,N,l]},\\
&\forall (m,n)\in[\![0,p_1]\!]\times[\![p_1+1,q_1-1]\!],v_{2m+1,2n+3}^{[M,N,l]}=\dfrac{l^2(2n+3)(n+1)(n-m)}{(n-m+1)(n-p)(2(p+n)+5)}v_{2m+1,2n+1}^{[M,N,l]}.
\end{split}&&
\end{flalign*}
\end{corollary}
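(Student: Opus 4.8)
The plan is to read off each recurrence directly from the closed-form products for $v_{2m,2n}^{[M,N,l]}$ and $v_{2m+1,2n+1}^{[M,N,l]}$ proved in Theorem \ref{theorem:V}, by forming the ratio of the shifted coefficient to the original and checking that every factor telescopes to the stated rational function. For the first identity I would write $v_{2m+2,2n}^{[M,N,l]}$ by substituting $m\mapsto m+1$ in the Theorem \ref{theorem:V} formula — legitimate because the hypothesis $(m,n)\in[\![0,p-1]\!]\times[\![p+1,q]\!]$ keeps all indices in range — and then compute $v_{2m+2,2n}^{[M,N,l]}/v_{2m,2n}^{[M,N,l]}$ factor by factor.

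The bookkeeping splits into four groups. The sign $(-1)^{p-m}$ contributes $-1$; the power $l^{2(n-m)}$ contributes $l^{-2}$; the factor $2^{n-m}$ in the denominator contributes $2$. The factorials $(n-m)!$, $(p-m)!$, $(2m)!$ contribute $(n-m)$, $(p-m)$ and $1/\bigl((2m+1)(2m+2)\bigr)$ respectively, using $\bigl(2(m+1)\bigr)!=2(m+1)(2m+1)\,(2m)!$. Finally the two products each telescope by one term: $\prod_{r=n-p}^{n-m-1}r$ loses its top factor, contributing $1/(n-m-1)$, while $\prod_{r=p+m+1}^{p+n}\tfrac{1}{2r+1}$ loses its bottom factor, contributing $2(p+m)+3$. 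Collecting these and using $2/\bigl((2m+1)(2m+2)\bigr)=1/\bigl((m+1)(2m+1)\bigr)$ produces exactly $-\dfrac{(n-m)(p-m)(2(p+m)+3)}{l^2(n-m-1)(2m+1)(m+1)}$. The second identity is obtained the same way by substituting $n\mapsto n+1$: now $(-1)^{p-m}$, $(p-m)!$ and $(2m)!$ cancel, $l^{2(n-m)}$ contributes $l^2$, $(2n)!$ contributes $(2n+1)(2n+2)$, $2^{n-m}$ contributes $1/2$, $(n-m)!$ contributes $1/(n-m+1)$, the product $\prod_{r=n-p}^{n-m-1}r$ shifts both endpoints and contributes $(n-m)/(n-p)$, and $\prod_{r=p+m+1}^{p+n}\tfrac{1}{2r+1}$ picks up $1/(2(p+n)+3)$; simplifying $(2n+2)/2=n+1$ gives the stated factor.

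The last two identities follow by the identical computation applied to the odd formula $v_{2m+1,2n+1}^{[M,N,l]}$, the only differences being $(2m+1)!$ in place of $(2m)!$ (so that $\bigl(2(m+1)+1\bigr)!=(2m+3)(2m+2)\,(2m+1)!$, producing the $(2m+3)$ and $(m+1)$ in the denominator), $(2n+1)!$ in place of $(2n)!$, and the second product running up to $p+n+1$ rather than $p+n$, which shifts the surviving boundary term from $2(p+m)+3$ to $2(p+m)+5$ in the $m$-recurrence and from $2(p+n)+3$ to $2(p+n)+5$ in the $n$-recurrence. I expect no genuine obstacle here, since the argument is pure algebraic simplification; the only care needed is tracking which endpoint of each product survives the shift and applying the factorial identities $\bigl(2(k+1)\bigr)!=2(k+1)(2k+1)(2k)!$ and $\bigl(2(k+1)+1\bigr)!=(2k+3)(2k+2)(2k+1)!$ correctly. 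A slightly more uniform alternative would be to verify each recurrence by clearing denominators and checking equality of two rational functions in $l,m,n,p$, but the direct ratio computation is the shortest route.
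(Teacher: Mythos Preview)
Your proposal is correct and takes exactly the approach the paper does: the paper's entire proof of this corollary is the single line ``All the formulas can be derived directly from Theorem \ref{theorem:V},'' and your ratio-by-ratio telescoping of the closed-form products is precisely that derivation written out in full. The bookkeeping you give (sign, power of $l$, power of $2$, factorial shifts, and the two boundary terms of the products) is accurate in every case.
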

\begin{proof}
All the formulas can be derived directly from Theorem \ref{theorem:V}.\indent$\blacksquare$
\end{proof}
Using the fact that $V^{[M,M,l]}=I_M$ and Theorem \ref{theorem:V}, we can finally determine the coefficients $(b_0,...,b_M)$ in the canonical basis for the polynomial degree reduction of $P$ in $\langle\cdot,\cdot\rangle_l$ as such:
\begin{flalign}\label{eq:(b_0,...,b_M)}
\begin{cases}
&\forall m\in[\![0,\left\lfloor\tfrac{M}{2}\right\rfloor]\!], b_{2m}=a_{2m}+\sum\limits_{n=\left\lfloor\tfrac{M}{2}\right\rfloor+1}^{\left\lfloor\tfrac{N}{2}\right\rfloor}v_{2m,2n}^{[M,N,l]}a_{2n}\\
&\forall m\in[\![0,\left\lfloor\tfrac{M-1}{2}\right\rfloor]\!],b_{2m+1}=a_{2m+1}+\sum\limits_{n=\left\lfloor\tfrac{M-1}{2}\right\rfloor+1}^{\left\lfloor\tfrac{N-1}{2}\right\rfloor}v_{2m+1,2n+1}^{[M,N,l]}a_{2n+1}\\
\end{cases}&&
\end{flalign}
\section{Computational complexity and examples}\label{section:computational}
In this section, we will compare the computational complexity of determining the canonical coefficients $(b_0,...,b_M)$ using Eq. (\ref{eq:(b_0,...,b_M)}), and the computational complexity of determining them by calculating $V^{[M,N,l]}$ as a matrix product. We will also present an example of how Eq. (\ref{eq:(b_0,...,b_M)}) can be used and an example showing its numerical stability compared to the classical approach.
\begin{proposition}\label{proposition:complexity direct}
The computational complexity of using Eq. (\ref{eq:(b_0,...,b_M)}) is $\mathcal{O}((N-M)M).$
\end{proposition}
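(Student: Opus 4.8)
The plan is to bound, separately and additively, the number of arithmetic operations needed to (i) generate every coefficient $v^{[M,N,l]}_{i,j}$ that appears in Eq. (\ref{eq:(b_0,...,b_M)}) and (ii) assemble the two sums defining the $b_k$. Write $p=\lfloor M/2\rfloor$, $p_1=\lfloor(M-1)/2\rfloor$, $q=\lfloor N/2\rfloor$, $q_1=\lfloor(N-1)/2\rfloor$ as in Theorem \ref{theorem:V}. First I would simply count which entries are required: the even half of Eq. (\ref{eq:(b_0,...,b_M)}) uses the $v^{[M,N,l]}_{2m,2n}$ for $(m,n)\in[\![0,p]\!]\times[\![p+1,q]\!]$, which is $(p+1)(q-p)$ coefficients, and the odd half uses $(p_1+1)(q_1-p_1)$ coefficients; with the elementary bounds $p,p_1\le M/2$ and $q-p,\ q_1-p_1\le (N-M)/2+1$, both counts are $\mathcal{O}((N-M)M)$.

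Next I would show these entries can all be produced in $\mathcal{O}((N-M)M)$ operations by routing through Corollary \ref{corollary} rather than evaluating the closed forms of Theorem \ref{theorem:V} from scratch. Compute one seed, say $v^{[M,N,l]}_{0,2(p+1)}$, directly from Theorem \ref{theorem:V}: its value is a ratio of products each with $\mathcal{O}(M)$ factors (e.g. $(2p+2)!$, $\prod_{r=1}^{p}r$, $\prod_{r=p+1}^{2p+1}\tfrac1{2r+1}$, and $l^{2(p+1)}$), so this costs $\mathcal{O}(M)$. Then fill the column $n=p+1$, i.e. $v^{[M,N,l]}_{2,2(p+1)},\dots,v^{[M,N,l]}_{2p,2(p+1)}$, by iterating the recurrence of Corollary \ref{corollary} giving $v^{[M,N,l]}_{2m+2,2n}$ from $v^{[M,N,l]}_{2m,2n}$ at $\mathcal{O}(1)$ per step, a further $\mathcal{O}(M)$ operations; every denominator there is nonzero on the stated range since $n\ge m+2$, $m+1\ge1$, $2m+1\ge1$ and $l>0$. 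Finally, for each fixed $m\in[\![0,p]\!]$ sweep $n$ from $p+1$ to $q$ with the recurrence giving $v^{[M,N,l]}_{2m,2n+2}$ from $v^{[M,N,l]}_{2m,2n}$, at $\mathcal{O}(1)$ per step and $\mathcal{O}(q-p)$ per row, for a total of $\mathcal{O}((p+1)(q-p))=\mathcal{O}((N-M)M)$. The odd-indexed entries $v^{[M,N,l]}_{2m+1,2n+1}$ are handled identically from a seed $v^{[M,N,l]}_{1,2p_1+3}$ with the two remaining recurrences of Corollary \ref{corollary}, again in $\mathcal{O}((N-M)M)$ operations.

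With all required coefficients stored, I would finish by reading off Eq. (\ref{eq:(b_0,...,b_M)}) directly: each $b_{2m}$ is $a_{2m}$ plus a sum of $q-p$ products, i.e. $\mathcal{O}(q-p)$ operations, over $p+1$ values of $m$, and similarly for the $b_{2m+1}$, contributing another $\mathcal{O}((N-M)M)$. Adding the three pieces, namely initialization $\mathcal{O}(M)$, coefficient generation $\mathcal{O}((N-M)M)$, and assembly $\mathcal{O}((N-M)M)$, and using $N>M$ so that $N-M\ge1$ and the $\mathcal{O}(M)$ term is dominated, yields the claimed total $\mathcal{O}((N-M)M)$.

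The main obstacle here is not a hard computation but pinning down the efficient algorithm that one is actually counting: evaluating the closed forms of Theorem \ref{theorem:V} entry by entry would cost $\mathcal{O}(N)$ each, hence $\mathcal{O}((N-M)MN)$ overall, which is worse than claimed; the proof must therefore exploit the constant-cost recurrences of Corollary \ref{corollary} and seed them cheaply, the only unavoidable non-constant step being the $\mathcal{O}(M)$ evaluation of the two seeds. The remaining care is the bookkeeping inequality $q-p\le(N-M)/2+1$ (and its odd analogue), which keeps the number of columns proportional to $N-M$ rather than to $N$.
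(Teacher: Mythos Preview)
Your argument is correct and follows essentially the same idea as the paper's proof: both rely on Corollary \ref{corollary} so that each required entry $v^{[M,N,l]}_{i,j}$ is obtained in $\mathcal{O}(1)$ from a neighbor, and both count $\mathcal{O}((N-M)M)$ such entries plus $\mathcal{O}((N-M)M)$ operations for the sums in Eq. (\ref{eq:(b_0,...,b_M)}). Your write-up is in fact more careful than the paper's, since you explicitly account for the $\mathcal{O}(M)$ cost of computing the two seed values from Theorem \ref{theorem:V} and verify that the denominators in the recurrences never vanish on the relevant index ranges---details the paper's proof leaves implicit when it simply asserts that each $v^{[M,N,l]}_{m,n}$ costs a constant number of operations.
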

\begin{proof}
According to Eq. (\ref{eq:(b_0,...,b_M)}), the number of summations and products that are required to determine one coefficient $b_m$ is at most $2\left(\left\lfloor\dfrac{N}{2}\right\rfloor-\left\lfloor\dfrac{M}{2}\right\rfloor\right)$. However, this is without counting how many operations are needed to determine $v_{m,n}^{[M,N,l]}.$ According to Corollary \ref{corollary}, the number of operations to determine $v_{m,n}^{[M,N,l]}$ is constant and does not depend on $M$ or $N$. Therefore, it can be ignored for complexity calculations. Since we need to calculate $M$ coefficients, the computational complexity is then of $\mathcal{O}\left(2M\left(\left\lfloor\dfrac{N}{2}\right\rfloor-\left\lfloor\dfrac{M}{2}\right\rfloor\right)\right)$. We have $2\left(\left\lfloor\dfrac{N}{2}\right\rfloor-\left\lfloor\dfrac{M}{2}\right\rfloor\right)\leq N-M+1$. Thus, we conclude that the computational complexity is $\mathcal{O}((N-M)M)$.\indent $\blacksquare$
\end{proof}
\begin{proposition}\label{proposition:complexity matrix}
The computational complexity of determining $(b_0,...,b_M)$ by calculating $V^{[M,N,l]}$ using a matrix product is $\mathcal{O}((N-M)M^2)$.
\end{proposition}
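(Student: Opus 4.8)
The plan is to count the arithmetic operations required by the classical route, which mirrors the proof of Proposition \ref{proposition:complexity direct}: form the entries of the two factor matrices $\left(T^{[M,M,l]}\right)^{-1}$ and $T^{[M,N,l]}$ from their closed forms, compute the product $V^{[M,N,l]}=\left(T^{[M,M,l]}\right)^{-1}T^{[M,N,l]}$, and then recover $(b_0,\dots,b_M)$ via Eq. (\ref{eq:(b_0,...,b_M)}). As noted just before Theorem \ref{theorem:V}, $V^{[M,M,l]}=I_M$, so the only entries of $V^{[M,N,l]}$ that genuinely have to be computed are those in columns $M+1,\dots,N$, i.e. at most $(M+1)(N-M)$ of them.

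First I would handle the assembly cost. The entries of $T^{[M,N,l]}$ are the $\langle X^n,e_{m,l}\rangle_l$ given in closed form by Theorem \ref{theorem:canonical to orthonormal}, and the entries of $\left(T^{[M,M,l]}\right)^{-1}$ are the $\epsilon_{n,m,l}$ given by Theorem \ref{theorem:orthonormal to canonical}. Exactly as in Corollary \ref{corollary}, consecutive entries along a row or column are linked by simple recurrence relations read off from those closed forms, so — after an $\mathcal{O}(N)$ precomputation of the factorials involved — each entry is obtained in $\mathcal{O}(1)$ operations. Hence assembling all of $\left(T^{[M,M,l]}\right)^{-1}$ costs $\mathcal{O}(M^2)$ and assembling the $N-M$ relevant columns of $T^{[M,N,l]}$ costs $\mathcal{O}((N-M)M)$; both are dominated by the target bound.

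Next I would count the product. Each entry $v_{m,n}^{[M,N,l]}=\sum_{k=0}^{M}\epsilon_{k,m,l}\langle X^n,e_{k,l}\rangle_l$ of the $N-M$ nontrivial columns is a dot product of one row of $\left(T^{[M,M,l]}\right)^{-1}$ with one column of $T^{[M,N,l]}$, costing $M+1$ multiplications and $M$ additions, i.e. $\mathcal{O}(M)$ operations. Multiplying by the number $(M+1)(N-M)$ of such entries gives $\mathcal{O}((N-M)M^2)$. The final evaluation of $(b_0,\dots,b_M)$ from these columns and $(a_0,\dots,a_N)$ via Eq. (\ref{eq:(b_0,...,b_M)}) costs a further $\mathcal{O}((N-M)M)$. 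Adding the three contributions and using $N-M\ge 1$ and $M\ge 1$ — so that $\mathcal{O}(M^2)$ and $\mathcal{O}((N-M)M)$ are absorbed into $\mathcal{O}((N-M)M^2)$ — I would conclude that the classical approach has complexity $\mathcal{O}((N-M)M^2)$.

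There is no deep obstacle here; the points that need care are precisely the two remarks that make the constant come out right: (i) the factorials appearing in Theorems \ref{theorem:canonical to orthonormal} and \ref{theorem:orthonormal to canonical} must be precomputed and reused (or handled through Corollary \ref{corollary}-style recurrences) so that each matrix entry is $\mathcal{O}(1)$ rather than $\mathcal{O}(N)$; and (ii) the identity block $V^{[M,M,l]}=I_M$ must be exploited, since naively multiplying the full $(M+1)\times(M+1)$ and $(M+1)\times(N+1)$ matrices would give only the weaker bound $\mathcal{O}(M^2N)$. Comparing the resulting $\mathcal{O}((N-M)M^2)$ with the $\mathcal{O}((N-M)M)$ of Proposition \ref{proposition:complexity direct} then exhibits the factor-$M$ saving claimed in the abstract.
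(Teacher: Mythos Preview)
Your proposal is correct and follows essentially the same approach as the paper: exploit $V^{[M,M,l]}=I_M$ to restrict attention to the $N-M$ rightmost columns, count the dot products in that partial matrix multiplication as $\mathcal{O}((N-M)M^2)$, and observe that assembling the factor matrices from their closed forms is comparatively negligible. Your version is merely more explicit than the paper's in quantifying the assembly and post-processing costs and in spelling out why the lower-order terms are absorbed.
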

\begin{proof}
We can assume that to calculate $V^{[M,N,l]}$, we only need to determine the elements of $V^{[M,N,l]}$ whose columns are greater than $M$, since $V^{[M,M,l]}=I_M$. This is equivalent to performing a matrix product between a $(M+1)\times (M+1)$ matrix and a $(M+1)\times(N-M)$ matrix which has a $\mathcal{O}((N-M)M^2)$ complexity. The complexities of determining $T^{[M,N,l]}$ and $\left(T^{[M,M,l]}\right)^{-1}$ using Eqs. (\ref{eq:canonical to orthonormal}) and (\ref{eq:orthonormal to canonical}) are negligible compared to a matrix product. Therefore, the computational complexity of determining $(b_0,...,b_M)$ by calculating $V^{[M,N,l]}$ using a matrix product is $\mathcal{O}((N-M)M^2)$.\indent $\blacksquare$
\end{proof}
According to Proposition \ref{proposition:complexity direct} and Proposition \ref{proposition:complexity matrix}, directly computing $(b_0,...,b_M)$ using Eq. (\ref{eq:(b_0,...,b_M)}) is at least $M$ times less complex than calculating $V^{[M,N,l]}$. What follow are two examples showing the use of Theorem \ref{theorem:V} and Eq. (\ref{eq:(b_0,...,b_M)}).
\begin{example}$N=7,M=5$
\newline\newline
Let $l\in\mathbb{R}_{+}^*$. Using Theorem \ref{theorem:V}, we find that
\begin{flalign*}
V^{[5,7,l]}=
\begin{pmatrix}
1 & 0 & 0 & 0 & 0 & 0 &l^6\dfrac{1440}{66528} &0 \\ 
0 & 1 & 0 & 0 & 0 & 0 &0 & l^6\dfrac{10080}{123552}\\ 
0 & 0 & 1 & 0 & 0 & 0 &-l^4\dfrac{720}{1584} &0 \\ 
0 & 0 & 0 & 1 & 0 & 0 &0 &-l^4\dfrac{5040}{6864} \\ 
0 & 0 & 0 & 0 & 1 & 0 &l^2\dfrac{720}{528} &0\\ 
0 & 0 & 0 & 0 & 0 & 1 &0    &l^2\dfrac{5040}{3120}\\ 
\end{pmatrix}&&
\end{flalign*}
For a polynomial $P=\sum\limits_{k=0}^{7}a_kX^k$ where $(a_0,...,a_7)\in\mathbb{R}^8$, the canonical coefficients $(b_0,...,b_5)\in\mathbb{R}^6$ of the polynomial of degree $5$ that best approximates $P$ with respect to $\langle\cdot,\cdot\rangle_l$ are determined using Eq. (\ref{eq:(b_0,...,b_M)}) as such:
\begin{flalign*}
\begin{cases}
b_0&=a_0+l^6\dfrac{1440}{66528}a_6\vspace{0.5em}\\
b_1&=a_1+l^6\dfrac{10080}{123552}a_7\vspace{0.5em}\\
b_2&=a_2-l^4\dfrac{720}{1584}a_6\vspace{0.5em}\\
b_3&=a_3-l^4\dfrac{5040}{6864}a_7\vspace{0.5em}\\
b_4&=a_4+l^2\dfrac{720}{528}a_6\vspace{0.5em}\\
b_5&=a_5+l^2\dfrac{5040}{3120}a_7\\
\end{cases}&&
\end{flalign*}
Figure \ref{approximation example} shows an example of a polynomial of degree $7$ approximated by a polynomial of degree $5$ on the interval $[-5,5]$.
\begin{figure}[h!]
\centering
\includegraphics[scale=0.25]{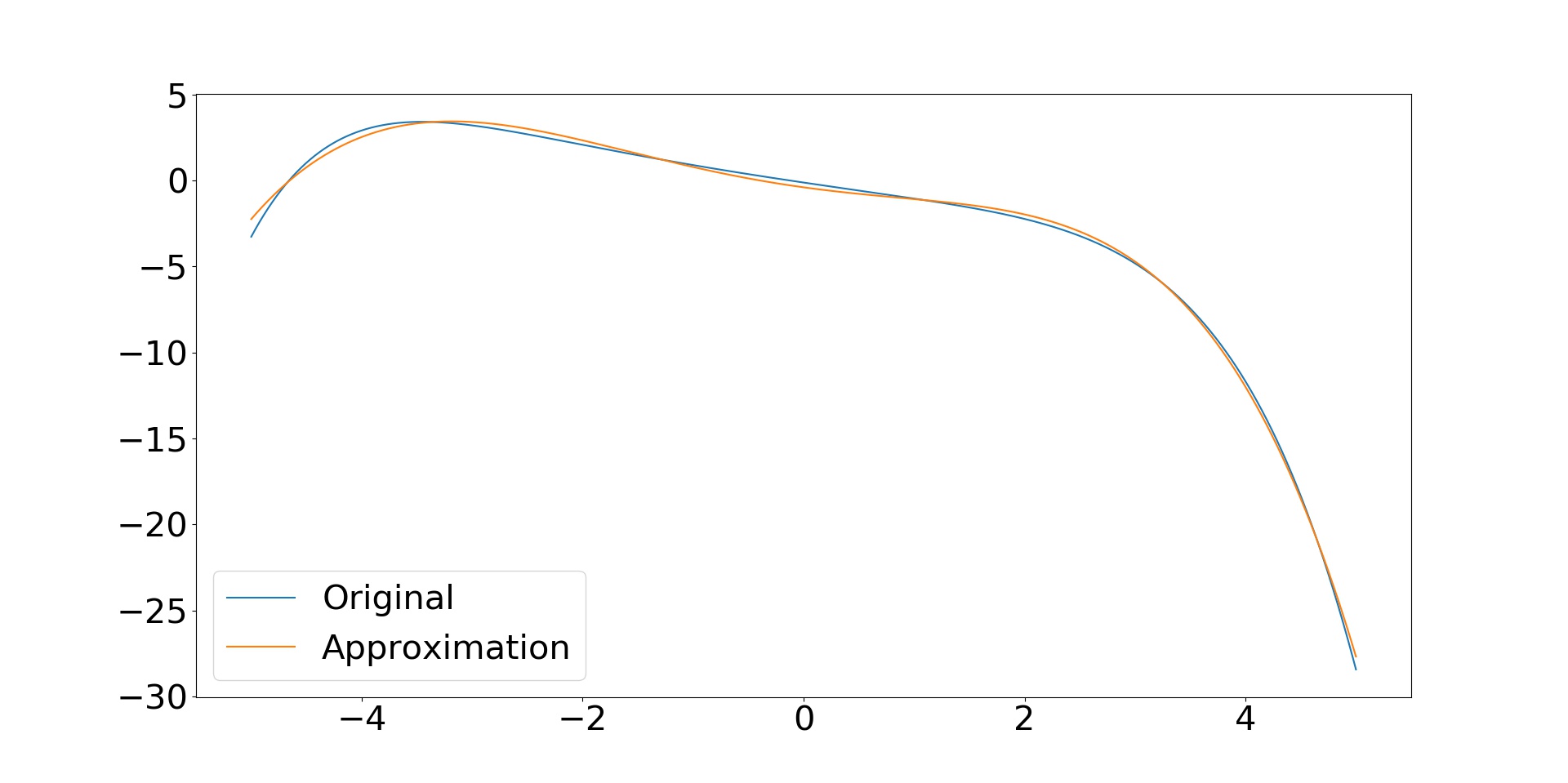}
\caption{A polynomial of degree $7$ (in blue) approximated by a polynomial of degree $5$ (in orange) on the interval $[-5,5]$.}
\label{approximation example}
\end{figure} 
\end{example}
\begin{example}$N=150,M=40,l=1$
\newline\newline
In this example, we want to reduce a polynomial of degree $150$ to a polynomial of degree $40$ on the interval $[-1,1]$. Figure \ref{fig:stability example}.(a) shows the original polynomial in blue and its approximation in orange using the direct formula while figure \ref{fig:stability example}.(b) shows the original polynomial in blue and its approximation in orange using the matrix multiplication approach. We notice that there are artifacts that corrupt the approximation using the matrix multiplication approach and this is mainly due to the floating point precision of the computation. Indeed, the matrix multiplication approach involves additions and multiplications between floating point quantities in order to determine one element of $V^{[M,N,l]}$. As a result, the direct formula is numerically more stable since, according to Theorem \ref{theorem:V}, every element of $V^{[M,N,l]}$ is the multiplication between a floating point quantity $l$ and a fraction.  In fact, a fraction is defined as a floating point quantity but it results from the division of two integer quantities. Consequently, the numerical floating point representation of the fraction will be better preserved than when using the matrix multiplication approach, hence the smooth approximation that is observed in Figure \ref{fig:stability example}.(a).
\begin{figure}[h!]
\centering
\subfigure[Polynomial reduction using the direct formula.]{\includegraphics[scale=0.4]{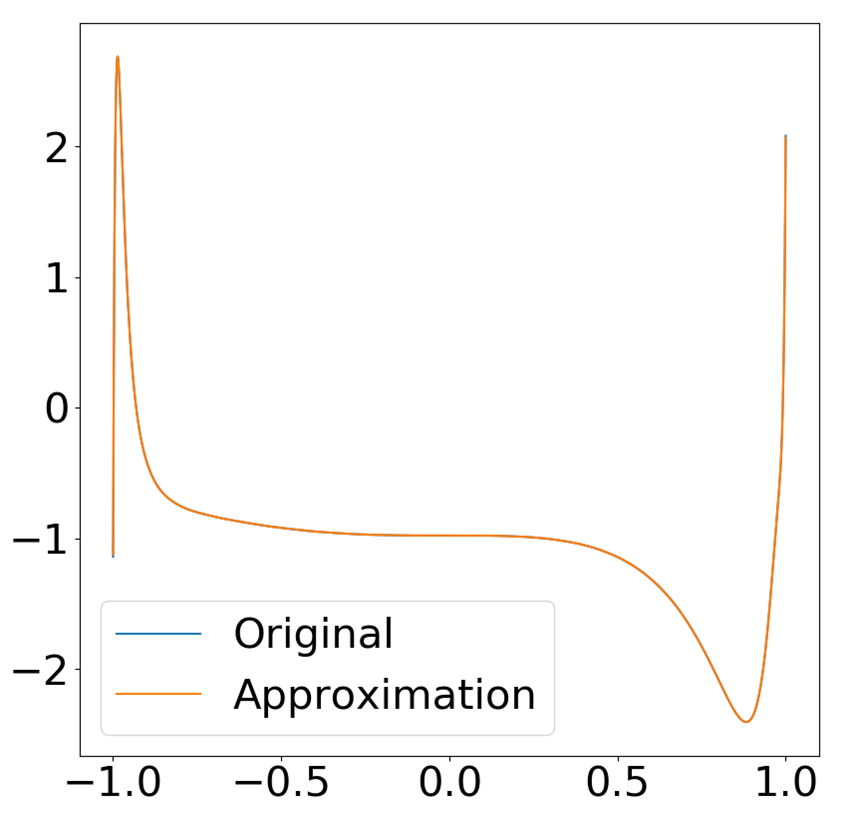}}\hspace{5mm}\subfigure[Polynomial reduction using the classical matrix multiplication approach.]{\includegraphics[scale=0.4]{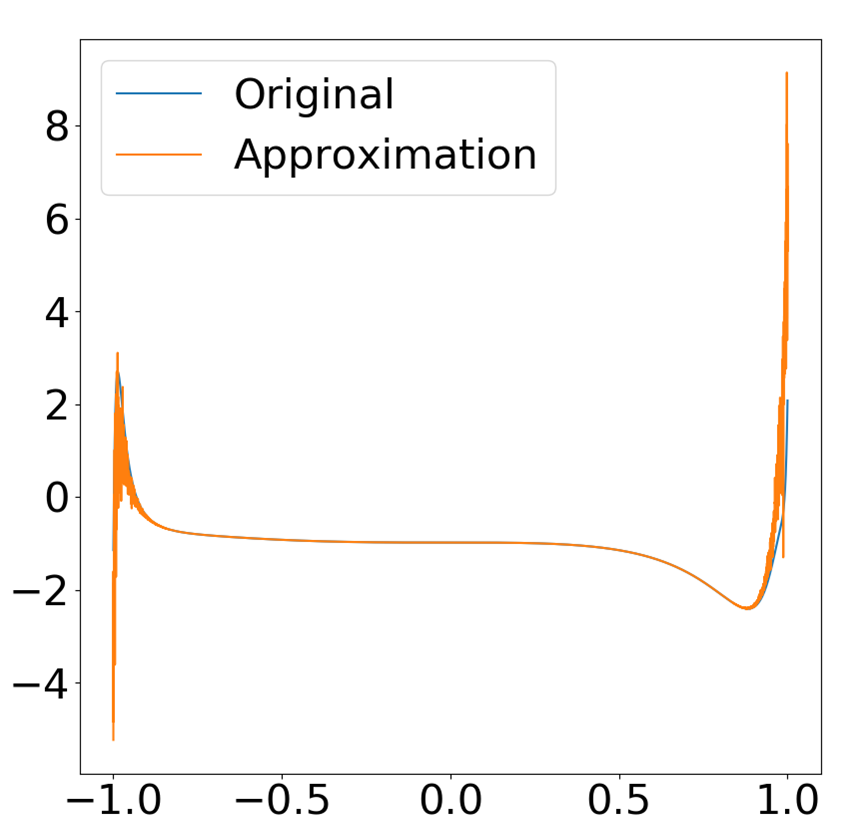}}
\caption{Comparison between the use of the direct formula and the use of the matrix multiplication approach on polynomial reduction.}
\label{fig:stability example}
\end{figure}
\end{example}
\section*{Acknowledgement}
This work was funded by the Mitacs Globalink Graduate Fellowship Program (no. FR41237) and the NSERC Discovery Grants Program (nos. 194376, 418413).
\section*{Declaration of interest}
None.
\bibliography{polynomial_reduction}
\bibliographystyle{ieeetr}
\end{document}